\documentclass[11pt]{article}

\usepackage[margin=1in]{geometry}
\usepackage[T1]{fontenc}
\usepackage{lmodern}
\usepackage{microtype}
\usepackage{amsmath,amssymb,amsthm,mathtools}
\usepackage{xcolor}
\usepackage[colorlinks=true,linkcolor=blue!55!black,citecolor=blue!55!black,urlcolor=blue!55!black]{hyperref}
\usepackage[capitalise,noabbrev]{cleveref}

\numberwithin{equation}{section}
\newtheorem{theorem}{Theorem}[section]
\newtheorem{proposition}[theorem]{Proposition}
\newtheorem{lemma}[theorem]{Lemma}
\newtheorem{corollary}[theorem]{Corollary}
\theoremstyle{definition}
\newtheorem{definition}[theorem]{Definition}
\newtheorem{remark}[theorem]{Remark}

\newcommand{\R}{\mathbb R}

\newcommand{\cA}{\mathcal A}
\newcommand{\cH}{\mathcal H}

\newcommand{\Tr}{\operatorname{Tr}}
\newcommand{\vol}{\operatorname{vol}}
\newcommand{\supp}{\operatorname{supp}}
\newcommand{\ee}{\mathrm e}
\newcommand{\dd}{\,\mathrm d}
\newcommand{\abs}[1]{\lvert #1\rvert}
\newcommand{\norm}[1]{\lVert #1\rVert}

\title{Spherical density for real semisimple groups}
\author{Christopher Lutsko}
\date{}

\begin{document}
\maketitle

\begingroup
\renewcommand{\thefootnote}{}
\footnotetext{2020 Mathematics Subject Classification:
22E46, 11F72. Keywords: spherical density, weak injective radius.}
\endgroup

\begin{abstract}
Let $G$ be a connected semisimple real Lie group with finite center and let
$(\Gamma_N)$ be a tower of cocompact lattices.  We prove that the weak
injective radius property implies the spherical
density hypothesis with the same parameter.  Together with the converse
theorem of Golubev and Kamber, this gives an equivalence in the cocompact
Archimedean setting.  The proof uses a uniform spherical-mass estimate near
spectral walls and a positive pre-trace kernel, but no tempered-spectrum
estimate or a priori spectral gap.
\end{abstract}

\section{Introduction}

Let $G$ be a connected semisimple real Lie group with finite center and let
$K$ be a maximal compact subgroup.  Let $X_N=\Gamma_N\backslash G/K$ be
finite covers of a fixed compact locally symmetric space; thus
$\Gamma_N<\Gamma_1$ has finite index, with no normality assumption.  Put
$V_N=[\Gamma_1:\Gamma_N]$, and let $m(\pi,\Gamma_N)$ be the multiplicity of
$\pi$ in $L^2(\Gamma_N\backslash G)$.  If $\pi$ is an irreducible unitary
representation of $G$, let $p(\pi)$ be the infimum of the $p\geq2$ for which
the $K$-finite matrix coefficients of $\pi$ belong to $L^{p+\epsilon}(G)$
for every $\epsilon>0$.

Let $\widehat G_{\mathrm{sph}}$ be the set of spherical irreducible unitary
representations.  Fix a positive elliptic Casimir operator $\Omega$.  If
$\chi_\pi(\Omega)$ is the scalar by which it acts on $\pi$, set
$\Lambda(\pi)=1+\abs{\chi_\pi(\Omega)}$.  For $L\geq0$ and $p\geq2$, put
\begin{equation}\label{eq:M-definition}
 M_{\mathrm{sph}}(L,\Gamma_N,p)
 =\sum_{\substack{\pi\in\widehat G_{\mathrm{sph}}\\
                   \Lambda(\pi)\leq L,\ p(\pi)\geq p}}
       m(\pi,\Gamma_N).
\end{equation}

Using the Cartan decomposition, we write $g=k\exp(H)k'$ with $H$ in the closed positive Cartan chamber and set
$\ell(g)=2\rho(H)$, where $\rho$ is the half-sum of the positive restricted
roots, counted with multiplicity.  Golubev and Kamber introduced a weak
injective radius property, recalled in \cref{def:wirp}, and formulated their
spherical density hypothesis as
an upper bound for $M_{\mathrm{sph}}(L,\Gamma_N,p)$ with the Sarnak--Xue
exponent.  They conjectured that the former implies the latter
\cite[Conjecture 1.7]{GK}, and proved this over non-Archimedean fields and in
real rank one.  In higher real rank the missing input was a spherical lower
bound uniform near Weyl walls; see \cite[Conjecture 3.12]{GK}.

Our main theorem proves the Archimedean spherical implication.

\begin{theorem}
\label{thm:main}
Let $G$ be a connected semisimple real Lie group with finite center, and let
$(\Gamma_N)$ be a tower of cocompact lattices satisfying the weak injective
radius property with parameter $0<\alpha\leq1$.  There is a constant
$C=C(G)>0$ such that, for every $L\geq0$, $p\geq2$, and $\epsilon>0$,
\begin{equation}\label{eq:main}
 M_{\mathrm{sph}}(L,\Gamma_N,p)
 \ll_{G,\Gamma_1,\alpha,\epsilon}(1+L)^C
 V_N^{1-\alpha(1-2/p)+\epsilon}.
\end{equation}
\end{theorem}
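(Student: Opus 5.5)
We follow the Sarnak--Xue / Golubev--Kamber strategy: test $L^2(\Gamma_N\backslash G)$ against a positive bi-$K$-invariant kernel supported on a large ball, and compare a spectral lower bound with a geometric upper bound. Fix $R>0$, to be chosen later as $R=\alpha\log V_N$ up to a constant. Let $k_R=\psi_\delta*\mathbf 1_{B_R}*\mathbf 1_{B_R}^{\vee}*\psi_\delta$, where $B_R=\{g:\ell(g)\le R\}$ and $\psi_\delta$ is a smooth bi-$K$-invariant bump near $e$. This makes $k_R$ of positive type and bi-$K$-invariant, supported in $\{\ell\le 2R+O(1)\}$, with $\|k_R\|_\infty\ll \vol(B_R)$ and $\vol(B_R)\asymp \ee^{R}$ up to polynomial factors in $R$.

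\textbf{Spectral side.} Each spherical $\pi$ contributes $m(\pi,\Gamma_N)\,\hat k_R(\pi)$ to the trace, where $\hat k_R(\pi)=\abs{\hat\psi_\delta(\pi)}^2\abs{\langle \pi(\mathbf 1_{B_R})v_K,v_K\rangle}^2$ and $v_K$ is the spherical vector. All other terms are nonnegative, so
\[
\Tr R(k_R)\ \ge\ \sum_{\Lambda(\pi)\le L,\ p(\pi)\ge p} m(\pi,\Gamma_N)\,\abs{\hat\psi_\delta(\pi)}^2\Bigl|\int_{B_R}\varphi_\pi\Bigr|^2 .
\]
The key input is the uniform spherical-mass estimate promised in the abstract:
\[
\Bigl|\int_{B_R}\varphi_\pi\Bigr|\gg (1+\Lambda(\pi))^{-C}R^{-C}\ee^{R(1-1/p(\pi))}
\quad\text{whenever } p(\pi)\ge p,
\]
uniformly in $\pi$, including when the spectral parameter lies near a Weyl wall. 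The heuristic is that $\varphi_\pi(\exp H)\approx \ee^{(\Re\nu-\rho)(H)}$ on most of the chamber, and that $p(\pi)$ is governed by $\max\Re\nu$ relative to $\rho$, so integrating against the volume density $\ee^{2\rho(H)}$ over $2\rho(H)\le R$ produces $\ee^{R(1-1/p)}$. This is the step I expect to be the main obstacle. Harish-Chandra's $c$-function expansion degenerates near walls, since $c(\nu)$ has poles and the leading terms cancel. I would first try to avoid the expansion altogether: use positivity of $\varphi_\pi$ for real parameters, or the integral formula $\varphi_\nu(g)=\int_K \ee^{(\nu-\rho)(H(gk))}\,dk$, to bound $\varphi_\pi$ from below on a cone of $H$ away from walls. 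I would then use continuity in $\nu$, controlled by $\Lambda(\pi)$ through derivative bounds, to handle complementary-series parameters with small imaginary parts, paying only polynomial losses in $L$ and $R$. Since $\hat\psi_\delta(\pi)\gg1$ for $\Lambda(\pi)\le L$ once $\delta\asymp L^{-1/2}$, the spectral side is at least $(1+L)^{-C}R^{-C}\ee^{2R(1-1/p)}M_{\mathrm{sph}}(L,\Gamma_N,p)$.

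\textbf{Geometric side.} By the pre-trace formula,
\[
\Tr R(k_R)=\int_{\Gamma_N\backslash G}\sum_{\gamma\in\Gamma_N}k_R(x^{-1}\gamma x)\dd x .
\]
The identity term gives $\vol(\Gamma_N\backslash G)\,k_R(e)\ll V_N\ee^{R}$. For the remaining terms I invoke the weak injective radius property of \cref{def:wirp}. It says that, outside a set of measure $\ll V_N^{1-\alpha+\epsilon}$, points have no nontrivial $\gamma$ with $\ell(x^{-1}\gamma x)\le 2\alpha\log V_N$ or so (in the normalization of the definition). On the exceptional set, a lattice point count in $\Gamma_1$ bounds the sum by $\ll \ee^{2R}$ up to polynomial factors. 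Taking $R$ equal to the admissible radius, so that $\ee^{R}\approx V_N^{\alpha}$, gives
\[
\Tr R(k_R)\ll_\epsilon V_N^{1+\epsilon}\ee^{R}+V_N^{1-\alpha+\epsilon}\ee^{2R}\ll V_N^{1+\alpha+\epsilon}.
\]

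\textbf{Conclusion.} Comparing the two sides,
\[
M_{\mathrm{sph}}(L,\Gamma_N,p)\ll(1+L)^{C}V_N^{1+\alpha+\epsilon}\ee^{-2R(1-1/p)}=(1+L)^CV_N^{1+\alpha-2\alpha(1-1/p)+\epsilon}=(1+L)^CV_N^{1-\alpha(1-2/p)+\epsilon},
\]
which is \eqref{eq:main}. The factors $R^{C}$ are absorbed into $V_N^{\epsilon}$. The bookkeeping of the exact radius against the normalization in \cref{def:wirp} may shift constants, but not the exponent.
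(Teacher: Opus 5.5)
Your overall architecture matches the paper's, and your exponent bookkeeping is correct: a positive $K$-bi-invariant kernel, a spectral lower bound of $\ee^{2R(1-1/p)}$ per counted representation against a geometric bound of $V_N^{1+\epsilon}\ee^R$, and $R=\alpha\log V_N+O(1)$. However, the step you flag as the main obstacle is a genuine gap, and your proposed tools do not reach the representations being counted.

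\textbf{The spectral lower bound.} Because $\mathbf 1_{B_R}*\mathbf 1_{B_R}$ does not depend on the parameter, you need $\abs{\int_{B_R}\varphi_\lambda}$ to be large. For non-real $\lambda$ this is a cancellation problem.

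In real rank at least two, counted representations can have mixed parameters with $\operatorname{Re}\lambda\neq0$ and $\norm{\operatorname{Im}\lambda}$ as large as $L^{1/2}$. An example is $\pi_s\otimes\pi_{it}$ on $SL_2(\R)\times SL_2(\R)$ with $0<s<1/2$ and $t$ large; it has $p(\pi)>2$. Positivity of $\varphi_\lambda$ is unavailable there. Continuity from a real parameter only covers perturbations of size a negative power of $R$, since $\partial_\nu\varphi_\nu(\exp H)$ carries a factor $\norm H\approx R$.

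There is a second problem. The mass $\ee^{R(1-1/p)}$ comes from the critical extreme ray of the chamber, which in rank at least two lies on walls. A lower bound on a fixed cone away from the walls therefore in general loses a factor $\ee^{-cR}$, which is a power of $V_N$.

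In the example, take $\ell=r_1+r_2$, the sum of the two hyperbolic distances. The critical ray is $r_2=0$, and one finds
\[
 \int_{B_R}\varphi_\lambda=c'_s\,\kappa_s(t)\,\ee^{(s+1/2)R}+O_s(R\ee^{R/2}),
 \qquad
 \kappa_s(t)=\int_0^\infty\varphi_{it}(r)\sinh r\,\ee^{-(s+1/2)r}\dd r,
\]
with $c'_s>0$. So your claim amounts to a uniform polynomial lower bound for $\abs{\kappa_s(t)}$. This is an oscillatory integral of $\varphi_{it}$ that includes the region near the wall, where no Harish--Chandra expansion is available. For general $G$, several Weyl terms with different phases compete at the critical vertex. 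Your plan gives no argument for any of this.

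\textbf{How the paper avoids it.} The paper makes the test function depend on the parameter. It sets
\[
 \beta_{\lambda,i,R}=\chi_{\lambda,i,R}\,\ee^{\ell/p_\lambda}\,\overline{\varphi_\lambda(g^{-1})},
\]
so that $\widehat\beta_{\lambda,i,R}(\lambda)=\int\chi_{\lambda,i,R}\ee^{\ell/p_\lambda}\abs{\varphi_\lambda}^2$ is a positive mass and no cancellation is possible. This mass is bounded below on a tube around the critical ray, pushed $a_0\log(2+Q_\lambda R)$ into the chamber. The bound uses the Narayanan--Pasquale--Pusti expansion, the finite wall induction, and the quasipolynomial independence lemma. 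A polynomial-size net of such kernels then covers every counted parameter. The weight $\ee^{\ell/p_\lambda}$ keeps $\abs\beta$ polynomially bounded, so $\beta^**\beta$ has the same profile as your kernel.

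\textbf{The geometric side.} You also misread \cref{def:wirp}. It is an averaged count bound: it allows $V_N^\epsilon\ee^{d/2}$ returns of length at most $d$ on average. For instance, it permits $V_N^\epsilon$ bounded-length returns at every point, and $V_N^{\alpha+\epsilon}$ returns at the top radius. It does not give an exceptional set of measure $V_N^{1-\alpha+\epsilon}$ off which there are no nontrivial short returns; that is a stronger, injectivity-radius-type hypothesis. Even granting it, with only $\norm{k_R}_\infty\ll\ee^R$ the lattice count on your exceptional set would give $\ee^{3R}$, not $\ee^{2R}$.

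This part is repairable with your own kernel. The Golubev--Kamber Cartan convolution estimate gives $\abs{k_R(g)}\ll R^C\ee^{R-\ell(g)/2}\mathbf 1_{\ell(g)\leq2R+C}$. Splitting into unit shells in $\ell$ and applying \eqref{eq:wirp} on each shell yields $\Tr R_{\Gamma_N}(k_R)\ll R^{C}V_N^{1+\epsilon}\ee^R$ for $R\leq\alpha\log V_N-C_1$, exactly as in \cref{lem:geometric-trace}. This is the bound you wrote, now derived from the actual hypothesis.
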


Golubev and Kamber proved the converse implication \cite[Theorem 1.6]{GK}.
Their argument preserves the parameter $\alpha$, and hence we obtain the
following.

\begin{corollary}\label{cor:equivalence}
For towers of cocompact lattices in $G$, the weak injective radius property
with parameter $\alpha$ is equivalent to the spherical density hypothesis
with parameter $\alpha$.
\end{corollary}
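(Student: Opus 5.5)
The corollary follows from two implications that are already available, so the plan is to check that their hypotheses and conclusions match exactly.

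\emph{Forward direction.} Assume $(\Gamma_N)$ is a tower of cocompact lattices with the weak injective radius property (\cref{def:wirp}) with parameter $\alpha\in(0,1]$. Then \cref{thm:main} gives \eqref{eq:main} for every $L\geq0$, $p\geq2$ and $\epsilon>0$. I will compare this with the spherical density hypothesis as formulated in \cite{GK}. That hypothesis asks, for each fixed spectral window (or with polynomial dependence on the window), for a bound
\[
M_{\mathrm{sph}}(L,\Gamma_N,p)\ll_{L,\epsilon} V_N^{1-\alpha(1-2/p)+\epsilon},
\]
which is the Sarnak--Xue exponent with parameter $\alpha$. The factor $(1+L)^C$ in \eqref{eq:main} is independent of $N$. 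It can therefore be absorbed into the implied constant for each fixed $L$, or kept as the polynomial loss allowed in their formulation. Either way \eqref{eq:main} implies the spherical density hypothesis with the same $\alpha$.

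Two bookkeeping points need checking in this direction.
\begin{itemize}
\item GK's quantity must be normalized as ours is, using the Casimir eigenvalue bound $\Lambda(\pi)\leq L$ and the exponent $p(\pi)$. If they instead parametrize spherical representations by the spectral parameter $\nu\in\mathfrak a^*_{\mathbb C}$, the two windows are comparable. This holds because $\abs{\chi_\pi(\Omega)}$ grows like $\abs{\nu}^2$, and passing between the windows only changes $C$.
\item Their volume normalization may be $\vol(\Gamma_N\backslash G)$ rather than $V_N$. These differ by the fixed factor $\vol(\Gamma_1\backslash G)$.
\end{itemize}

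\emph{Converse direction.} Assume the spherical density hypothesis with parameter $\alpha$ for a tower of cocompact lattices. By \cite[Theorem 1.6]{GK}, the tower then has the weak injective radius property with the same parameter $\alpha$. I will confirm that their argument does not lose in the exponent. Roughly, one tests the pre-trace formula against a spherical kernel concentrated on $\{\ell(g)\leq R\}$ and bounds the spectral side using the density hypothesis, then chooses $R\approx \alpha\log V_N$. I will also confirm that their argument only uses the hypothesis for bounded $L$, or with polynomial $L$-dependence, which is exactly what the forward direction supplies.

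The only potential obstacle is definitional: the two formulations of the spherical density hypothesis must coincide, including the quantifier structure in $L$ and $\epsilon$. Once the normalizations above are matched, the corollary is simply the combination of the two implications.
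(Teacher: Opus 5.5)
Your route is the paper's: \cref{thm:main} gives the forward implication, and \cite[Theorem~1.6]{GK} gives the converse with the same $\alpha$. The one compatibility check the paper actually makes is on the weak injective radius side, and you skip it. \cref{def:wirp} averages $\#\{\gamma\in\Gamma_N:\ell(x^{-1}\gamma x)\leq d\}$ over $x\in\Gamma_N\backslash G$, whereas \cite[Theorem~1.6]{GK} delivers the property in the formulation of \cite[Section~2.2]{GK}. Your sentence ``the tower then has the weak injective radius property'' therefore silently identifies two different definitions.

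\cref{rem:wirp-formulations} closes this gap. It splits $\Gamma_N\backslash G$ into $V_N$ translates of a compact fundamental set $\Omega$ for $\Gamma_1$. It then uses $\abs{\ell(y^{-1}gy)-\ell(g)}\leq2\ell(y)\leq C_\Omega$ for $y\in\Omega$, so each count is bounded by the other with the radius shifted by $O(1)$. That shift is absorbed into $C_0$ and the implied constant, and $\alpha$ is unchanged; you should add this step.

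By contrast, the density-side matching you flag is not where the work lies. The paper's spherical density hypothesis, including the $(1+L)^C$ loss, is taken to be GK's formulation. It is also exactly what \cref{thm:main} proves.
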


\subsection*{Relation to previous work}

Sarnak and Xue introduced density bounds as a quantitative substitute for
temperedness \cite{SX}.  Golubev and Kamber placed these estimates in a
geometric framework and isolated spherical density and weak injective radius
as two sides of the same conjectural picture \cite{GK}.  

There has also been substantial arithmetic progress.  Blomer proved strong
density estimates for $\mathrm{GL}_n$ \cite{Blomer}; Assing and Blomer treated
principal congruence subgroups \cite{AB}; and Assing--Blomer--Nelson obtained
arbitrary level and transferred the result to certain compact inner forms
\cite{ABN}.  Further results include horizontal families of arithmetic
lattices \cite{FHMM}, paramodular quotients \cite{AssingSp4}, and irreducible
lattices in products of rank-one groups \cite{Kelmer}.  These arguments use
arithmetic trace formulae, transfer, or the structure of the automorphic
spectrum.  The theorem here is an implication from a geometric counting
property and applies to an arbitrary cocompact tower.

The positive-kernel construction has a more direct lineage.  It refines Oh's
pre-trace argument for the exceptional spectrum of congruence covers
\cite[Section~2.3]{OhDensity}.  In the author's earlier density theorem for
thin groups \cite{LutskoDensity}, the same argument was sharpened by stopping
the kernel before the first nonidentity return.  The new feature here is the
localization of the kernel in a critical Cartan direction, together with
uniform control as the spectral parameter approaches a wall.

The representation-theoretic inputs are different as well.  The polyhedral
description of $p(\pi)$ is the same geometry which appears in \cite{LWW}.
The expansion at singular spectral parameters is due to Narayanan, Pasquale,
and Pusti \cite{NPP}.  Their construction is local in the spectral parameter.
The required global uniformity is obtained below from their explicit
regularization and a finite induction over affine root hyperplanes.

\subsection*{Proof mechanism}

The invariant $p(\pi)$ measures the decay of the matrix coefficients of
$\pi$; larger values correspond to slower decay.  Let $\varphi_\lambda$ be
the normalized spherical function and put
\[
 \vartheta(\lambda)=1-\frac2{p(\pi_\lambda)}
 \qquad\text{and}\qquad Q_\lambda=2+\norm{\lambda}.
\]
By \cref{lem:p-poly}, $\vartheta(\lambda)$ is the maximum over the finitely
many normalized extreme rays of the positive Cartan chamber.  The rays and the polyhedral
formula are explained in \cref{sec:spectral-parameters}; this is the same
description used in \cite{LWW}.  Choose a direction where the maximum is
attained and move it a logarithmic distance into the interior of the chamber.
The move costs only a power of $Q_\lambda R$, and the Harish--Chandra
expansion then gives a uniform amount of spherical mass.  At a singular
parameter we use the all-parameter expansion of Narayanan, Pasquale, and
Pusti \cite{NPP}; a finite induction over the singular hyperplanes prevents
cancellation of the leading term.  We obtain a cutoff $\chi_{\lambda,R}$
supported where $\ell(g)=R+O(1)$ and
\[
 \int_G\chi_{\lambda,R}(g)\ee^{\ell(g)/p(\pi_\lambda)}
       \abs{\varphi_\lambda(g)}^2\dd g
 \gg Q_\lambda^{-C}R^{-C}
       \ee^{R(1-1/p(\pi_\lambda))}.
\]

Define
\[
 \beta_{\lambda,R}(g)
 =\chi_{\lambda,R}(g)\ee^{\ell(g)/p(\pi_\lambda)}
       \overline{\varphi_\lambda(g^{-1})},
 \qquad
 \Phi_{\lambda,R}=\beta_{\lambda,R}^* *\beta_{\lambda,R}.
\]
Then $\Phi_{\lambda,R}$ is positive on the unitary dual,
\[
\widehat\Phi_{\lambda,R}(\lambda)
 \gg Q_\lambda^{-C}R^{-C}
       \ee^{2R(1-1/p(\pi_\lambda))},
\]
while the convolution estimate of \cite{GK} gives
\[
 \abs{\Phi_{\lambda,R}(g)}
 \ll Q_\lambda^CR^C\ee^{R-\ell(g)/2}
      \mathbf1_{\ell(g)\leq2R+O(1)}.
\]
This is the pre-trace construction of \cite[Section~2.3]{OhDensity}, with
the kernel localized in the critical Cartan direction.  A polynomially sized
net of these kernels amplifies every parameter being counted.  The weak
injective radius property bounds the geometric trace by
$V_N^{1+\epsilon}\ee^R$, up to polynomial factors, whereas each counted
representation contributes at least $\ee^{2R(1-1/p)}$.  Taking
$R=\alpha\log V_N+O(1)$ gives \eqref{eq:main}.

\section{Preliminaries}

We recall the notation from spherical harmonic analysis that is used in the
proof.  Compact factors of $G$ may be included in $K$, so we assume that
$G$ has no compact factors.  Let $\mathfrak g$ and $\mathfrak k$ be the Lie
algebras of $G$ and $K$.  The Cartan decomposition of the Lie algebra is
\[
                         \mathfrak g=\mathfrak k\oplus\mathfrak p.
\]
Fix a maximal abelian subspace $\mathfrak a\subset\mathfrak p$.  Let $\Sigma$
be the restricted root system, with root multiplicities $m_\alpha$, and
choose positive roots $\Sigma^+$ with simple roots $\Delta$.  The open
positive Cartan chamber is
\[
 \mathfrak a^+=\{H\in\mathfrak a:\alpha(H)>0
                         \text{ for every }\alpha\in\Delta\}.
\]
We write $\overline{\mathfrak a^+}$ for its closure and put
\[
 \rho=\frac12\sum_{\alpha\in\Sigma^+}m_\alpha\alpha.
\]

\subsection*{Notation}

The normalizer $N_K(\mathfrak a)$ consists of the elements of $K$ which
preserve $\mathfrak a$, and the centralizer $Z_K(\mathfrak a)$ consists of
those which act trivially on it.  Their quotient
\[
                         W=N_K(\mathfrak a)/Z_K(\mathfrak a)
\]
is the \emph{restricted Weyl group}.  Fix a $W$-invariant inner product
$\langle\cdot,\cdot\rangle$ on $\mathfrak a$.  For
$\alpha\in\mathfrak a^*$, let $\alpha^\sharp\in\mathfrak a$ be defined by
$\alpha(H)=\langle\alpha^\sharp,H\rangle$.  The \emph{coroot} of $\alpha$
is
\[
 \alpha^\vee=
 \frac{2\alpha^\sharp}{\langle\alpha^\sharp,\alpha^\sharp\rangle};
 \qquad \alpha(\alpha^\vee)=2.
\]
Thus $\lambda(\alpha^\vee)$ means simply that the functional $\lambda$ is
evaluated at the vector $\alpha^\vee$.  The inner product identifies
$\mathfrak a$ with $\mathfrak a^*$ and gives norms on these spaces and on
their complexifications.  Write
$\mathfrak a_{\mathbb C}^*=\mathfrak a^*\otimes_\R\mathbb C$.  On
$\mathfrak a_{\mathbb C}^*/W$ we use the quotient distance
\[
 d_W(\lambda,\nu)=\min_{w\in W}\norm{\lambda-w\nu}.
\]
A functional $\eta\in\mathfrak a^*$ is
\emph{dominant} if
\[
              \eta(\alpha^\vee)\geq0
              \qquad(\alpha\in\Delta),
\]
and every $W$-orbit in $\mathfrak a^*$ has a dominant representative.  For
$X,Y\in\overline{\mathfrak a^+}$, we write $X\preceq Y$ if
\[
                      Y-X=\sum_{\alpha\in\Delta}t_\alpha\alpha^\vee
                      \quad\text{with }t_\alpha\geq0.
\]
This is the dominance order on the closed chamber.

We write $A\ll_S B$ if $\abs A\leq C_S B$, and $A\asymp_S B$ if both
$A\ll_S B$ and $B\ll_S A$.  The indicator of a set $E$ is denoted by
$\mathbf1_E$.  The space $C_c^\infty(G)$ consists of the smooth,
compactly supported functions on $G$, and
$C_c^\infty(K\backslash G/K)$ is its subspace of functions satisfying
$f(k_1gk_2)=f(g)$ for $k_1,k_2\in K$.  We call these functions
$K$-bi-invariant.  We normalize Haar measure on $K$ to have mass one.  Our
convolution and involution conventions are
\[
 (f_1*f_2)(g)=\int_G f_1(h)f_2(h^{-1}g)\dd h,
 \qquad f^*(g)=\overline{f(g^{-1})}.
\]

The Cartan decomposition says that every $g\in G$ can be written as
$g=k\exp(H)k'$ with $k,k'\in K$ and a unique
$H\in\overline{\mathfrak a^+}$.  This $H$ is the \emph{Cartan projection}
$\mu(g)$.  We use the scalar length
\[
                         \ell(g)=2\rho(\mu(g)).
\]
The vector-valued triangle inequality for this order,
$\mu(gh)\preceq\mu(g)+\mu(h)$, is proved, for example, in
\cite[Remark~3.36(ii)]{KLM}.  Since $\rho$ is dominant, it gives
\begin{equation}\label{eq:length-properties}
 \ell(g^{-1})=\ell(g),\qquad \ell(gh)\leq\ell(g)+\ell(h).
\end{equation}
In particular, $\ell$ is a proper, continuous, $K$-bi-invariant length.

\subsection{Spherical parameters and the \texorpdfstring{$p$}{p}-invariant}
\label{sec:spectral-parameters}

For $\pi\in\widehat G_{\mathrm{sph}}$, choose a unit vector $v\in\pi^K$ and
write
\[
                    \varphi_\pi(g)=\langle\pi(g)v,v\rangle,
                    \qquad \varphi_\pi(e)=1.
\]
Normalized spherical functions are parameterized by
$\lambda\in\mathfrak a_{\mathbb C}^*/W$, where
the quotient identifies $\lambda$ with $w\lambda$ for $w\in W$.  We write
$\varphi_\lambda$ and $\pi_\lambda$ for the corresponding function and
representation.  We normalize the parameter so that the exponents in the
Harish--Chandra expansion are $w\lambda-\rho$; in the convention which writes
$iw\lambda-\rho$, our $\lambda$ is replaced by $i\lambda$.  In particular,
$\varphi_\lambda=\varphi_{w\lambda}$.  When useful, we choose a
representative for which $\operatorname{Re}\lambda$ is dominant.  By the
Helgason--Johnson theorem, a bounded spherical function has a representative
with
\[
                 \operatorname{Re}\lambda\in\operatorname{conv}(W\rho).
\]
Here $\operatorname{conv}(W\rho)$ is the convex hull of the finite Weyl
orbit of $\rho$; see \cite[Theorem~IV.8.1]{Helgason}.  We use the spectral
size
\[
                              Q_\lambda=2+\norm\lambda.
\]
For the positive elliptic Casimir operator $\Omega$ fixed in the
introduction, let $\chi_\lambda(\Omega)$ be its scalar action on
$\pi_\lambda$.  Thus
\[
                 \Lambda(\pi_\lambda)=1+\abs{\chi_\lambda(\Omega)}.
\]
Since the real part of a unitary spherical parameter lies in a fixed compact
set, the Harish--Chandra isomorphism
\cite[Theorem~II.5.18]{Helgason} gives
\begin{equation}\label{eq:height-comparison}
 Q_\lambda^2\asymp_G 1+\Lambda(\pi_\lambda)
 \qquad(\pi_\lambda\in\widehat G_{\mathrm{sph}}).
\end{equation}

The closed chamber $\overline{\mathfrak a^+}$ is a cone cut out by the
linear inequalities $\alpha(H)\geq0$, $\alpha\in\Delta$.  A ray
$\R_{\geq0}H$ in this cone is \emph{extreme} if $H=H'+H''$ with
$H',H''\in\overline{\mathfrak a^+}$ forces $H'$ and $H''$ to lie on the
same ray.  Equivalently, the extreme rays are the one-dimensional faces of
the cone.  Choose one vector $H_i$ on each extreme ray and normalize it by
$2\rho(H_i)=1$.  If there are $r=\dim\mathfrak a$ such rays, define
\begin{equation}\label{eq:theta-poly}
 \vartheta(\lambda)
 =\max_{w\in W}\max_{1\leq i\leq r}
       \frac{\operatorname{Re}(w\lambda)(H_i)}{\rho(H_i)}.
\end{equation}

\begin{lemma}\label{lem:p-poly}
For a spherical unitary representation $\pi_\lambda$,
\[
              \vartheta(\lambda)=1-\frac2{p(\pi_\lambda)}.
\]
Here $p(\pi_\lambda)=\infty$ when $\vartheta(\lambda)=1$.
\end{lemma}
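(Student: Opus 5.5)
The plan is to compute $p(\pi_\lambda)$ directly from the exponential growth rate of $\varphi_\lambda$ along the chamber, and then use linearity to reduce the resulting cone condition to the extreme rays $H_i$. First I would reduce from general $K$-finite matrix coefficients to the spherical function itself. Since $\pi_\lambda$ is irreducible and spherical, every $K$-finite coefficient $\langle\pi_\lambda(g)u,v\rangle$ has a Harish--Chandra asymptotic expansion on $\exp(\mathfrak a^+)$ with the same leading exponents as $\varphi_\lambda$. Its integrability threshold is therefore the same as that of $\varphi_\lambda$; the upper bound is uniform over $K$-types, and the lower bound holds for $\varphi_\lambda$ itself. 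So it suffices to find the infimum of the $p$ with $\varphi_\lambda\in L^{p+\epsilon}(G)$ for all $\epsilon>0$.

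Next, choose the representative with $\eta=\operatorname{Re}\lambda$ dominant. The key analytic input is the two-sided bound of Trombi--Varadarajan type, which I would take in the following form. For every $H\in\overline{\mathfrak a^+}$,
\[
 |\varphi_\lambda(\exp H)|\ll_\lambda (1+\norm H)^{d}\,\ee^{(\eta-\rho)(H)} .
\]
In the other direction, on a subcone of the open chamber around any prescribed direction, $|\varphi_\lambda(\exp H)|\gg_\lambda \ee^{(\eta-\rho)(H)}$ for $H$ large. The upper bound follows from $|\varphi_\lambda|\le\varphi_{\eta}$ and the expansion of $\varphi_\eta$. For the lower bound, the leading coefficient at the dominant exponent $\eta-\rho$ does not cancel when one averages in $H$, even at singular $\lambda$; there the expansion carries polynomial coefficients, and one takes the top-degree term.

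Then I would integrate in polar coordinates. Write $\dd g=\delta(H)\dd k\dd H\dd k'$ with $\delta(H)\asymp \ee^{2\rho(H)}$ on cones that stay away from the walls, and $\delta(H)\ll\ee^{2\rho(H)}$ everywhere. This gives $\varphi_\lambda\in L^q(G)$ for $q>2$ when $q(\eta-\rho)(H)+2\rho(H)<0$ on $\overline{\mathfrak a^+}\setminus\{0\}$, i.e.
\[
 \eta(H)<\bigl(1-\tfrac2q\bigr)\rho(H)\quad\text{for all } H\in\overline{\mathfrak a^+}\setminus\{0\}.
\]
The polynomial factor is harmless, since the inequality is strict and the cone condition is compact on the sphere. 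Conversely, $\varphi_\lambda\notin L^q(G)$ when the inequality $\eta(H_0)>(1-2/q)\rho(H_0)$ holds for some $H_0$. By perturbing $H_0$ we may take it in the open chamber, and the lower bound then makes the integral over a small cone around $H_0$ diverge. Both sides are linear in $H$ and the chamber is generated by the rays $\R_{\ge0}H_i$, so the condition reduces to $\eta(H_i)/\rho(H_i)<1-2/q$ for all $i$. Hence $1-2/p(\pi_\lambda)=\max_i\eta(H_i)/\rho(H_i)$, with the conventions $p=\infty$ when this maximum equals $1$, and $p=2$ when it is $\le 0$, which happens only for tempered $\lambda$.

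Finally, to match \eqref{eq:theta-poly} I would check that $\max_{w}\operatorname{Re}(w\lambda)(H)=\eta(H)$ for $H\in\overline{\mathfrak a^+}$. This is the standard fact that $\eta-w\eta$ is a nonnegative combination of simple roots when $\eta$ is dominant, and it makes the outer maximum over $W$ harmless. The main obstacle is the lower bound near the extreme rays, which lie on walls where both $\delta(H)$ and the Harish--Chandra expansion degenerate. Moving into the open chamber by continuity of the linear condition avoids this: only a fixed nonuniform cone is needed, not uniformity in $\lambda$. The remaining issue is nonvanishing of the leading coefficient at singular $\lambda$, and for that I would cite the Trombi--Varadarajan or Narayanan--Pasquale--Pusti expansion.
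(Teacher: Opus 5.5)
Your proposal is correct in substance, but it takes a different route from the paper. The paper does no analysis at this point. It quotes the leading-exponent criterion \cite[Theorem~9.2]{GK}, which already identifies $1-2/p(\pi_\lambda)$ with the supremum of $\max_w\operatorname{Re}(w\lambda)(H)/\rho(H)$ over nonzero $H\in\overline{\mathfrak a^+}$, for all $K$-finite coefficients at once. It then restricts to the simplex $2\rho(H)=1$, where a maximum of linear functions is convex and therefore attains its maximum at a vertex $H_i$; the maximum over $W$ never has to be removed.

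You instead re-derive that criterion for spherical $\pi_\lambda$: an upper bound for all $K$-finite coefficients from the exponents of the principal series, divergence for $\varphi_\lambda$ itself, and polar coordinates. You also replace the convexity step by the dominance identity $\max_w\operatorname{Re}(w\lambda)(H)=\operatorname{Re}\lambda(H)$ on $\overline{\mathfrak a^+}$, after which the condition is linear and reduces to the generators of the cone. This route is self-contained. It also proves the identity the paper uses tacitly in the critical-direction subsection, where it writes $\operatorname{Re}\lambda(H_i)=\frac12\vartheta(\lambda)$ for a critical index. The cost is two analytic inputs. For a fixed but possibly singular $\lambda$, you need the nonvanishing of $P_{\lambda,e}$ (\cite[Corollary~2.10]{NPP}, the same input as part~(3) of \cref{prop:uniform-HC}). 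For the upper bound, you need the global estimate for $\varphi_{\operatorname{Re}\lambda}$ up to the walls \cite[Proposition~4.6.1]{GV}. That estimate does not follow from the expansion alone, because the expansion does not converge on the walls.

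One claim must be corrected. The pointwise bound $\abs{\varphi_\lambda(\exp H)}\gg_\lambda\ee^{(\eta-\rho)(H)}$ on a cone in the open chamber is false whenever the stabilizer of $\eta$ in $W$ moves $\operatorname{Im}\lambda$. For example, on $\mathrm{SL}_2(\R)\times\mathrm{SL}_2(\R)$ take a complementary-series parameter in the first factor and a nonzero tempered parameter in the second. Then $\varphi_\lambda$ is a product of two rank-one spherical functions, and the second is a conical Legendre function with infinitely many zeros. So $\varphi_\lambda$ vanishes on lines that meet every open cone arbitrarily far out, including cones around directions $H_0$ with $\eta(H_0)>(1-2/q)\rho(H_0)$.

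What is true, and what your remark about averaging points to, is a box estimate $\int_{H+B}\abs{\varphi_\lambda}^2\gg\ee^{2(\eta-\rho)(H)}$ for a fixed unit box $B$ and $H=tH_0$ with $t$ large. It follows from the grouped leading terms by quantitative independence of exponentials, as in \cref{lem:finite-exponential}, since the top-degree coefficient of $P_{\lambda,e}$ is unchanged by translation. Because $q\geq2$, H\"older on each box gives $\int_{H+B}\abs{\varphi_\lambda}^q\gg\ee^{q(\eta-\rho)(H)}$. Summing over disjoint boxes along $tH_0$ then gives the divergence you need.
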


\begin{proof}
The leading-exponent criterion for matrix coefficients
\cite[Theorem~9.2]{GK} identifies the right-hand side with the maximum of
$\max_w\operatorname{Re}(w\lambda)(H)/\rho(H)$ over nonzero $H$ in the
positive chamber.  This is the polyhedral norm used in
\cite[Proposition~3.3]{LWW}.  Because the quotient is unchanged when $H$
is multiplied by a positive scalar, it is enough to consider the compact
simplex $2\rho(H)=1$.  On that simplex the function is convex and
piecewise linear.  It is therefore maximized at a vertex, and these vertices
are precisely the normalized extreme rays $H_i$.
\end{proof}

For $f\in C_c^\infty(K\backslash G/K)$, its \emph{spherical transform} is
\[
 \widehat f(\lambda)=\int_G f(g)\varphi_\lambda(g^{-1})\dd g.
\]
The integrated operator $\pi_\lambda(f)=\int_Gf(g)\pi_\lambda(g)\dd g$
acts on the line $\pi_\lambda^K$ by the scalar
$\widehat f(\lambda)$.
For an arbitrary unitary representation $\pi$, the same notation means
$\pi(f)=\int_G f(g)\pi(g)\dd g$.
Thus $\widehat{f^* *f}(\lambda)=\abs{\widehat f(\lambda)}^2$ on the spherical
unitary dual.  Moreover, if $f$ is $K$-bi-invariant, then
$\operatorname{Tr}\pi_\lambda(f)=\widehat f(\lambda)$ for a spherical
representation, while $\pi(f)=0$ for a nonspherical representation.

\subsection{The two density properties}

Normalize the Haar measure by $\vol(\Gamma_1\backslash G)=1$, so that
$\vol(\Gamma_N\backslash G)=V_N$.

\begin{definition}\label{def:wirp}
The tower $(\Gamma_N)$ has the weak injective radius property with parameter
$0<\alpha\leq1$ if there exists $C_0>0$ such that, uniformly for
$0\leq d\leq2\alpha\log V_N-C_0$,
\begin{equation}\label{eq:wirp}
 \frac1{V_N}\int_{\Gamma_N\backslash G}
 \#\{\gamma\in\Gamma_N:\ell(x^{-1}\gamma x)\leq d\}\dd x
 \ll_\epsilon V_N^\epsilon\ee^{d/2}.
\end{equation}
For each $\epsilon>0$, the implied constant is independent of $N$ and $d$.
\end{definition}

\begin{remark}
\label{rem:wirp-formulations}
For a cocompact base lattice, \cref{def:wirp} is equivalent to the weak
injective radius property of \cite[Section 2.2]{GK}, with the same parameter
$\alpha$.  Indeed, choose a compact fundamental set $\Omega\subset G$ for
$\Gamma_1\backslash G$ and decompose $\Gamma_N\backslash G$ into its
$V_N$ translates.  The two averages differ only by conjugation by an element
$y\in\Omega$, and
\[
 \abs{\ell(y^{-1}gy)-\ell(g)}\leq2\ell(y)\leq C_\Omega.
\]
Thus each count is bounded by the other with the radius changed by at most
$C_\Omega$.  This preserves both the estimate and the parameter $\alpha$.
\end{remark}

\begin{remark}
The parameter $\alpha$ can be estimated directly in arithmetic examples.
Golubev and Kamber do this for principal congruence towers and, using general
results on congruence subgroups, obtain an explicit positive parameter for a
broader class of congruence towers; see \cite[Corollary~2.2 and
Theorem~2.3]{GK}.  Thus \cref{thm:main} gives unconditional spherical density
bounds for the cocompact congruence towers covered by their results.
\end{remark}

\begin{definition}
With $M_{\mathrm{sph}}$ as in \eqref{eq:M-definition}, the tower has the
spherical density hypothesis with parameter $\alpha$ if there is $C=C(G)>0$
such that
\[
 M_{\mathrm{sph}}(L,\Gamma_N,p)
 \ll_\epsilon(1+L)^C
 V_N^{1-\alpha(1-2/p)+\epsilon}
\]
for all $L\geq0$, $p\geq2$, and $\epsilon>0$.
\end{definition}

The exponent $C$ is independent of $p$ and $\epsilon$.  This uniformity in
the spectral aspect is used in \cref{sec:density-proof}.

\subsection{The geometric trace bound}

For $g\in G$, the right-regular action on $L^2(\Gamma_N\backslash G)$ is
$R_{\Gamma_N}(g)u(x)=u(xg)$.  For $F\in C_c^\infty(G)$, its integrated
operator is
\[
 (R_{\Gamma_N}(F)u)(x)=\int_G F(g)u(xg)\dd g.
\]
Its automorphic kernel is
$K_F(x,y)=\sum_{\gamma\in\Gamma_N}F(x^{-1}\gamma y)$.
Because $\Gamma_N\backslash G$ is compact, the right-regular representation
has a discrete spectral decomposition.  For smooth compactly supported $F$,
\begin{equation}\label{eq:spectral-trace}
 \Tr R_{\Gamma_N}(F)
 =\sum_{\pi}m(\pi,\Gamma_N)\Tr\pi(F),
\end{equation}
where the sum is over irreducible unitary representations occurring in
$L^2(\Gamma_N\backslash G)$.  If $F$ is $K$-bi-invariant, only spherical
representations contribute.

\begin{lemma}\label{lem:geometric-trace}
There is $C_1=C_1(G,C_0)$ with the following property.  Suppose
$R\leq\alpha\log V_N-C_1$ and $F=F^*\in C_c^\infty(G)$ satisfies
\begin{equation}\label{eq:F-majorant}
 \abs{F(g)}\leq A_R\ee^{R-\ell(g)/2}
       \mathbf1_{\ell(g)\leq2R+C_1}.
\end{equation}
Then
\[
 \abs{\Tr R_{\Gamma_N}(F)}
 \ll_{G,\epsilon} A_R(1+R)V_N^{1+\epsilon}\ee^R.
\]
\end{lemma}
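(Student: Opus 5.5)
The plan is to write the trace as an integral of the automorphic kernel over the diagonal, dominate $F$ by its majorant, and split the majorant into dyadic-length shells; the weak injective radius property then controls each shell. Since $\Gamma_N\backslash G$ is compact and $F$ is smooth and compactly supported, $R_{\Gamma_N}(F)$ is trace class with continuous kernel $K_F$. Hence
\[
 \Tr R_{\Gamma_N}(F)=\int_{\Gamma_N\backslash G}\sum_{\gamma\in\Gamma_N}F(x^{-1}\gamma x)\dd x .
\]
The hypothesis $F=F^*$ is only needed to make the trace real, and it plays no role in the bound. Inserting \eqref{eq:F-majorant} gives
\[
 \abs{\Tr R_{\Gamma_N}(F)}\leq A_R\ee^R\int_{\Gamma_N\backslash G}\sum_{\gamma:\ \ell(x^{-1}\gamma x)\leq 2R+C_1}\ee^{-\ell(x^{-1}\gamma x)/2}\dd x .
\]

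Next I decompose the range $0\leq\ell\leq 2R+C_1$ into unit shells $j\leq\ell<j+1$, for $j=0,1,\dots,\lfloor 2R+C_1\rfloor$. On the $j$-th shell the weight $\ee^{-\ell/2}$ is at most $\ee^{-j/2}$. The number of $\gamma$ in that shell is at most the number with $\ell(x^{-1}\gamma x)\leq j+1$. Averaging over $x$ and applying \eqref{eq:wirp} with $d=j+1$ bounds the $j$-th term by
\[
 A_R\ee^R\cdot V_N\cdot V_N^{\epsilon}\ee^{(j+1)/2}\ee^{-j/2}\ll A_R\ee^R V_N^{1+\epsilon}.
\]
Summing over the $O(1+R)$ shells gives the claimed bound $A_R(1+R)V_N^{1+\epsilon}\ee^R$.

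The one point requiring care is the admissible range of $d$ in \cref{def:wirp}. We need $d=j+1\leq 2R+C_1+1\leq 2\alpha\log V_N-C_0$. Given $R\leq\alpha\log V_N-C_1$, we have $2R+C_1+1\leq 2\alpha\log V_N-C_1+1$. So it suffices to choose $C_1\geq C_0+1$, and then every shell lies in the admissible range.

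The implied constant in \eqref{eq:wirp} is uniform in $d$ and $N$, so the final constant depends only on $\epsilon$ (and on $G$ and $C_0$ through $C_1$). The identity term $\gamma=e$ contributes $\ell=0$ and is already included in the shell $j=0$. For small $V_N$, where the admissible range may be empty, the hypothesis $R\leq\alpha\log V_N-C_1$ either forces $R<0$ or leaves only finitely many $N$. Those finitely many cases can be absorbed into the constant. I do not expect a real obstacle; the only work is the bookkeeping of $C_1$ against $C_0$.
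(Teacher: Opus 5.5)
Your argument is correct and is essentially the paper's proof: you write the trace as the diagonal integral of the automorphic kernel, split into unit shells $m\leq\ell<m+1$, apply the weak injective radius bound at radius $m+1$ on each shell, and sum $O(1+R)$ shells, with your choice $C_1\geq C_0+1$ (the paper takes $C_1\geq C_0+2$) making every radius admissible. Your closing caveat about small $V_N$ is unnecessary, because with that choice of $C_1$ every radius $j+1$ you use automatically lies in the admissible range whenever the support of $F$ is nonempty.
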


\begin{proof}
This is the shell argument used in
\cite[Section~2.3]{OhDensity}; compare also
\cite[proof of Proposition~3.9 and Corollary~5.4]{GK}.  Integrating the
automorphic kernel on the diagonal gives
\[
 \abs{\Tr R_{\Gamma_N}(F)}
 \leq\int_{\Gamma_N\backslash G}
       \sum_{\gamma\in\Gamma_N}
       \abs{F(x^{-1}\gamma x)}\dd x.
\]
Take $C_1\geq C_0+2$ and split the support into the shells
$m\leq\ell<m+1$, $m\in\mathbb Z_{\geq0}$.  Then every radius $m+1$ below
is in the range of \eqref{eq:wirp}.  On the $m$th shell,
\eqref{eq:F-majorant} and \eqref{eq:wirp} give
\begin{align*}
 \abs{\Tr R_{\Gamma_N}(F)}
 &\leq A_R\ee^R
   \sum_{\substack{m\in\mathbb Z_{\geq0}\\m\leq2R+C_1}}\ee^{-m/2}
   \int_{\Gamma_N\backslash G}
   \#\{\gamma\in\Gamma_N:\ell(x^{-1}\gamma x)<m+1\}\dd x\\
 &\ll_{G,\epsilon}A_R\ee^R V_N^{1+\epsilon}
   \sum_{\substack{m\in\mathbb Z_{\geq0}\\m\leq2R+C_1}}
       \ee^{-m/2}\ee^{(m+1)/2}\\
 &\ll_{G,\epsilon}A_R(1+R)V_N^{1+\epsilon}\ee^R.
\end{align*}
\end{proof}

We also use the local Weyl bound
\begin{equation}\label{eq:local-weyl}
 M_{\mathrm{sph}}(L,\Gamma_N,2)
 \ll_{G,\Gamma_1}V_N(1+L)^d
\end{equation}
for some $d=d(G)$.  This follows from the heat-kernel proof of Weyl's law;
see \cite{Hormander}.  The diagonal heat kernel on each cover is locally
the lift of the heat-kernel parametrix on the fixed base.  Its diagonal
upper bound is therefore independent of the cover, and integration over
$\Gamma_N\backslash G/K$ introduces the factor $V_N$.

\section{A uniform spherical lower bound}

This section proves the analytic estimate used by the positive kernel.  We
need a lower bound for $\varphi_\lambda$ which is uniform in both the Cartan
variable $H$ and the spectral parameter $\lambda$.  There are two possible
losses of uniformity.  First, the direction which controls $p(\pi_\lambda)$
may lie on the boundary of the positive Cartan chamber.  Second, the usual
Harish--Chandra formula has singular-looking coefficients when $\lambda$
approaches certain hyperplanes in the spectral parameter space.  We deal
with the first problem by moving the Cartan direction a distance
$O(\log(Q_\lambda R))$ into the chamber.  We deal with the second by using
the expansion of Narayanan, Pasquale, and Pusti at singular parameters and
then comparing nearby parameters.

\subsection{The critical direction}

Choose $H_\circ\in\mathfrak a^+$ with $2\rho(H_\circ)=1$.  This is a fixed
direction in the interior of the chamber.  By Weyl invariance, we choose the
representative of $\lambda$ for which $\operatorname{Re}\lambda$ is
dominant.  We call an index $i$ \emph{critical} if $H_i$ attains the maximum
in \eqref{eq:theta-poly}.  For such an index,
\[
             \operatorname{Re}\lambda(H_i)
             =\frac12\vartheta(\lambda).
\]
Let $a_0=a_0(G)$ be a sufficiently large constant, to be fixed in the proof
of \cref{lem:wall-induction}, and put
\begin{equation}\label{eq:log-push}
 \tau_{\lambda,R}=a_0\log(2+Q_\lambda R),\qquad
 H_{i,\lambda,R}(u)
 =(u-\tau_{\lambda,R})H_i+\tau_{\lambda,R}H_\circ.
\end{equation}
The variable $u$ is the radial variable for our length: since
$2\rho(H_i)=2\rho(H_\circ)=1$, we have
\[
                 2\rho(H_{i,\lambda,R}(u))=u.
\]
The point $uH_i$ may lie on a wall.  Formula \eqref{eq:log-push} moves it
by $\tau_{\lambda,R}(H_\circ-H_i)$ while keeping the same value of $u$.
If
$u\geq2\tau_{\lambda,R}$, every positive root satisfies
\begin{equation}\label{eq:root-distance}
 \alpha(H_{i,\lambda,R}(u))\gg_G\log(2+Q_\lambda R),
\end{equation}
and
\begin{equation}\label{eq:critical-loss}
 \operatorname{Re}\lambda(H_{i,\lambda,R}(u))
 \geq\frac12\vartheta(\lambda)u-C_G\log(2+Q_\lambda R).
\end{equation}
Indeed,
\[
 \alpha(H_{i,\lambda,R}(u))
 =(u-\tau_{\lambda,R})\alpha(H_i)
   +\tau_{\lambda,R}\alpha(H_\circ)
 \geq\tau_{\lambda,R}\alpha(H_\circ),
\]
which proves \eqref{eq:root-distance}.  Since $i$ is critical,
\begin{align*}
 \operatorname{Re}\lambda(H_{i,\lambda,R}(u))
 &=(u-\tau_{\lambda,R})\frac{\vartheta(\lambda)}2
   +\tau_{\lambda,R}\operatorname{Re}\lambda(H_\circ)\\
 &\geq\frac12\vartheta(\lambda)u-C_G\tau_{\lambda,R}.
\end{align*}
Here we used
$\operatorname{Re}\lambda\in\operatorname{conv}(W\rho)$, so
$\abs{\operatorname{Re}\lambda(H_\circ)}\ll_G1$.  This proves
\eqref{eq:critical-loss}.

Let
\[
             \ker\rho=\{Z\in\mathfrak a:\rho(Z)=0\}.
\]
Fix bounded open rectangular boxes, centered at the origin,
\[
             B_0\Subset B_1\Subset B_2\subset\ker\rho,
\]
where $B_j\Subset B_{j+1}$ means that the closure of $B_j$ is contained in
the interior of $B_{j+1}$.  The variable $Z\in\ker\rho$ moves within a level
set of the length.  Choose $A_0=A_0(G)$ sufficiently large compared with
$a_0$ and suppose that
\begin{equation}\label{eq:long-radius}
 R\geq A_0\log(2+Q_\lambda R),
\end{equation}
By \eqref{eq:root-distance}, the following sets lie in the open chamber
after $A_0$ is enlarged if necessary.  Choose a smooth function supported
in the open chamber which is one on
\[
 \{H_{i,\lambda,R}(u)+Z:R-2\leq u\leq R-1,\ Z\in B_1\}
\]
and is supported in
\[
 \{H_{i,\lambda,R}(u)+Z:R-3\leq u\leq R,\ Z\in B_2\}.
\]
Extend this function to a smooth $W$-invariant function on $\mathfrak a$ by
Weyl reflection and by zero outside the reflected supports.  Its composition
with the Cartan projection is then a smooth, nonnegative, $K$-bi-invariant
cutoff $\chi_{\lambda,i,R}$ on $G$.  Its support satisfies
\[
 \supp\chi_{\lambda,i,R}\subset
 \{g:R-3\leq\ell(g)\leq R\}.
\]
In Cartan coordinates, Haar measure has the form
\[
 \int_G f(g)\dd g
 =c_G\int_K\int_{\mathfrak a^+}\int_K
 f(k\exp(H)k')J(H)\dd k\dd H\dd k',
 \qquad
 J(H)=\prod_{\alpha\in\Sigma^+}(\sinh\alpha(H))^{m_\alpha}.
\]
The positive constant $c_G$ depends on the normalization of Haar measure.
Every positive root is large throughout the support of the cutoff.  Hence
\begin{equation}\label{eq:jacobian-tube}
 J(H_{i,\lambda,R}(u)+Z)\asymp_G\ee^u
\end{equation}
there, because $2\rho(H_{i,\lambda,R}(u)+Z)=u$ and
$\sinh\alpha(H)\asymp\ee^{\alpha(H)}$ for $\alpha\in\Sigma^+$.

\subsection{A finite exponential estimate}

We first record the only finite-dimensional estimate used below.  The
important point is that the number of exponents and the polynomial degrees
are fixed.

\begin{lemma}
\label{lem:finite-exponential}
Let $B\subset\R^n$ be a box with nonempty interior, and fix
$M,D,C_0\geq1$.  Suppose that the $P_j$ are complex polynomials,
$\zeta_j\in\mathbb C^n$, and
\[
 E(x)=\sum_{j=1}^mP_j(x)\ee^{\zeta_j\cdot x},
 \qquad m\leq M,\quad\deg P_j\leq D,
\]
where $\norm{\operatorname{Re}\zeta_j}\leq C_0$,
$\norm{\zeta_j}\leq Q$ with $Q\geq2$, and
$\norm{\zeta_j-\zeta_k}\geq\delta$ for $j\neq k$.  If
$\mathcal C(E)$ is the largest absolute value of a coefficient of one of the
$P_j$, then
\[
 \norm{E}_{L^2(B)}
 \geq c\,Q^{-C}\delta^C\mathcal C(E),
\]
where $c,C$ depend only on $B,M,D,C_0$.
\end{lemma}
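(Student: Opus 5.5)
We argue by reducing to a compactness statement on a fixed finite-dimensional space, after removing the two sources of non-uniformity: the size $Q$ of the frequencies and the separation $\delta$. Consider the linear space $\mathcal V$ spanned by the functions $x^\beta\ee^{\zeta_j\cdot x}$, $\abs\beta\leq D$, $j\leq m$. Its dimension is at most $N_0=M\binom{n+D}{n}$. The claim says that the map from coefficient vectors to $L^2(B)$ has smallest singular value at least $cQ^{-C}\delta^C$ in the sup-norm on coefficients. We will obtain this by an explicit inductive elimination of exponents with differential operators, rather than by compactness. Compactness fails as $\delta\to0$, where the exponentials degenerate toward polynomials times a single exponential, and it fails as $Q\to\infty$.

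The first step is to pass from $L^2(B)$ to pointwise and derivative control. Let $\mathcal V$ be invariant under $\partial_{x_k}$. On $\mathcal V$, the operators $\partial_{x_k}$ have eigenvalues $\zeta_{j,k}$ of size at most $Q$. By a Bernstein/Remez-type inequality for exponential polynomials (Nazarov--Turán type), $\norm{\partial^\gamma E}_{L^\infty(B')}\leq (CQ)^{C}\norm{E}_{L^2(B)}$ on a slightly smaller box $B'$. Alternatively we can get this by Cauchy estimates after noting that elements of $\mathcal V$ are entire functions of exponential type $\leq Q$, using the Turán lemma to bound $\sup_{B+\text{complex disc}}\abs E$ by $(CQ)^C\sup_B\abs E$. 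We then reduce to one variable. Restrict to lines $x=x_0+t v$ with $v$ chosen from a fixed finite set so that the projections $\zeta_j\cdot v$ stay $\gg\delta$-separated. A pigeonhole argument over a finite set of directions, depending only on $M,n$, loses only a constant factor in $\delta$. Coefficients of the $P_j$ are recovered, polynomially, from the one-variable restrictions along finitely many lines and base points.

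The core step is the one-variable estimate for $f(t)=\sum_j p_j(t)\ee^{z_jt}$ on an interval $I$. We use the annihilating operator $L_k=\prod_{j\neq k}(\frac{d}{dt}-z_j)^{D+1}$. This operator kills every term except the $k$-th, and on that term it acts as multiplication by $\prod_{j\neq k}(z_k-z_j)^{D+1}$ plus lower-order corrections in $p_k$. The resulting function $L_kf=q_k(t)\ee^{z_kt}$ has $\norm{L_kf}_{L^\infty(I')}\leq (CQ)^{C}\norm f_{L^2(I)}$ by the derivative bound. Because $\abs{\operatorname{Re}z_k}\leq C_0$, the factor $\ee^{z_kt}$ has modulus comparable to $1$ on $I$. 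Equivalence of norms on polynomials of degree $\leq D$ on a fixed interval then controls the coefficients of $q_k$. We invert the triangular map $p_k\mapsto q_k$. Its leading factor is $\prod_{j\neq k}(z_k-z_j)^{D+1}$, of modulus at least $\delta^{(M-1)(D+1)}$. Its off-diagonal entries are polynomial in $Q$ and in $(z_k-z_j)^{-1}$, hence bounded by $(Q/\delta)^C$. The total loss is therefore $Q^C\delta^{-C}$, which gives the claim.

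The main obstacle is the Bernstein step. Bounding derivatives of $f$ by $\norm{f}_{L^2}$ with only polynomial loss in $Q$, and no loss in $\delta$, requires a Turán/Nazarov lemma. That lemma is true: the loss is $\ee^{CQ\abs I}$-free up to $(CQ)^{C}$, because the number of terms is fixed. We would cite Nazarov's inequality, which bounds $\sup_I\abs f$ by $(C/\abs E)^{N_0-1}\ee^{\abs I\max\abs{\operatorname{Re}z}}\sup_E\abs f$, combined with the fact that $\mathcal V$ is closed under differentiation. If that route is awkward, we would absorb the derivative loss by working with mollified difference operators at scale $Q^{-1}$ instead of derivatives.
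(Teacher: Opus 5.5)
Your proposal is correct and has the same structure as the paper's argument. First, a Bernstein-type bound controls the derivatives of $E$ by $\norm{E}_{L^2(B)}$, losing only a polynomial in $Q$ and nothing in $\delta$; then the coefficients are recovered from derivative data by inverting a confluent Vandermonde system, losing a polynomial in $1/\delta$. The paper just cites Brudnyi for the first step and Batenkov's inverse confluent Vandermonde bound for the second, whereas you reduce to lines via a pigeonholed direction and invert by hand with the annihilators $\prod_{j\neq k}(\partial_t-z_j)^{D+1}$, which is the same computation made explicit (your Nazarov-plus-differentiation justification of the Bernstein step is only a sketch, but your difference-operator fallback at scale $Q^{-1}$ does work).
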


This is the fixed-complexity form of the standard quantitative independence
estimate for quasipolynomials.   A reference for
the multivariable Bernstein estimate is \cite{BrudnyiBernstein}; the
coefficient estimate, including polynomial factors, follows from the bound
for inverse confluent Vandermonde matrices in
\cite[Theorem~1]{BatenkovVandermonde}.

\subsection{The all-parameter expansion}

We first describe the hyperplanes which occur in the spectral parameter
space.  Use the inner product to put
\[
 \lambda_\alpha=
 \frac{\langle\lambda,\alpha\rangle}{\langle\alpha,\alpha\rangle}
 =\frac12\lambda(\alpha^\vee),
 \qquad \alpha\in\Sigma.
\]
For $n\in\mathbb Z$, the set
\begin{equation}\label{eq:affine-root-hyperplane}
 \mathcal H_{\alpha,n}
 =\{\lambda\in\mathfrak a_{\mathbb C}^*:\lambda_\alpha=n\}
\end{equation}
is an \emph{affine root hyperplane}.  It is called affine because it is a
translate of the linear root hyperplane $\lambda_\alpha=0$.  These
hyperplanes lie in the spectral parameter space; they should not be confused
with the walls $\alpha(H)=0$ of the Cartan chamber.

If $\lambda$ lies on none of these hyperplanes, the Harish--Chandra expansion
has the form
\begin{equation}\label{eq:regular-HC-schematic}
 \varphi_\lambda(\exp H)
 =\sum_{w\in W}c(w\lambda)\ee^{(w\lambda-\rho)(H)}
   \sum_{\mu\in\Lambda^+}\Gamma_\mu(w\lambda)\ee^{-\mu(H)}.
\end{equation}
Here $c$ is the Harish--Chandra $c$-function,
\[
 \Lambda^+=\left\{\sum_{\alpha\in\Delta}n_\alpha\alpha:
                         n_\alpha\in\mathbb Z_{\geq0}\right\}
\]
is the positive root lattice, and
$\operatorname{ht}(\mu)=\sum_{\alpha\in\Delta}n_\alpha$ is the height of
$\mu$.  The coefficients $c(w\lambda)$ and
$\Gamma_\mu(w\lambda)$ may have poles on hyperplanes of the form
\eqref{eq:affine-root-hyperplane}, although their Weyl sum, which is the
spherical function, is regular.  When exponents meet on such a hyperplane,
regularization replaces some exponentials by a polynomial in $H$ times an
exponential.  The elementary limit
\[
             \frac{\ee^{zt}-\ee^{-zt}}{z}\longrightarrow 2t
             \qquad(z\longrightarrow0)
\]
is the rank-one model for this phenomenon.

Fix a compact, convex, $W$-invariant neighborhood
$U_0\subset\mathfrak a^*$ of
$\operatorname{conv}(W\rho)$ and put
\[
                     \mathcal U=U_0+i\mathfrak a^*.
\]
The singular hyperplanes of the coefficients in
\eqref{eq:regular-HC-schematic} have integer translates as in
\eqref{eq:affine-root-hyperplane}.  Since $U_0$ is compact, only finitely
many of them meet $\mathcal U$.  Enlarge this finite collection to be
$W$-invariant and denote it by $\cH$.  A \emph{stratum} of $\cH$ is a connected
component of the set of parameters which lie on one fixed collection of
hyperplanes and on no others.  A \emph{proper substratum} is obtained by
imposing at least one additional hyperplane equation.  Distances are measured
using the norm fixed in the notation subsection.

\begin{proposition}
\label{prop:uniform-HC}
There are $C,d\geq1$ with the following property.  Let $S$ be a stratum,
$\lambda\in S$, suppose that $\operatorname{Re}\lambda$ is dominant, and
suppose, for some $0<\delta\leq1$, that $\lambda$ has distance at least
$\delta$ from every proper
substratum of $S$.  For
$H\in\mathfrak a^+$ with $\min_{\alpha\in\Delta}\alpha(H)\geq1$,
\[
 \varphi_\lambda(\exp H)=\mathcal L_\lambda(H)+\mathcal R_\lambda(H),
\]
where the following hold.
\begin{enumerate}
\item The leading part $\mathcal L_\lambda$ consists of the terms with
      $\mu=0$.  After equal exponentials have been grouped, it is a sum of
      at most $d$ terms $P(H)\ee^{(w\lambda-\rho)(H)}$, with $\deg P\leq d$.
\item The coefficients of these polynomials, and their parameter derivatives
      of order at most $d$, are bounded by $(Q_\lambda/\delta)^C$.
\item Let $P_{\lambda,e}(H)$ be the polynomial multiplying
      $\ee^{(\lambda-\rho)(H)}$ after all equal exponentials have been
      grouped.  At least one coefficient of $P_{\lambda,e}$ has absolute
      value at least $(Q_\lambda/\delta)^{-C}$.
\item For every $A>0$, if
      $\min_{\alpha\in\Delta}\alpha(H)\geq
      a_A\log(2+Q_\lambda/\delta)$, then
\begin{equation}\label{eq:HC-remainder}
 \abs{\mathcal R_\lambda(H)}
 \ll_{G,A}(1+\norm H)^C (Q_\lambda/\delta)^{-A}
 \ee^{-\rho(H)+\max_w\operatorname{Re}(w\lambda)(H)}.
\end{equation}
\end{enumerate}
All constants are uniform over the finitely many strata.
\end{proposition}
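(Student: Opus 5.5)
We will derive the proposition from the Narayanan--Pasquale--Pusti expansion \cite{NPP} near a fixed base point of the stratum, and make it uniform by compactness in the real directions and by translation structure in the imaginary directions.

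\emph{Step 1: a local normal form.} Fix a stratum $S$ and let $\cH_S\subset\cH$ be the hyperplanes containing $S$. Near a point $\lambda_0\in S$, the Harish--Chandra series \eqref{eq:regular-HC-schematic} is regularized in \cite{NPP} as follows. One multiplies the $c$-functions and the $\Gamma_\mu$ by the product of the affine linear forms $\lambda_\alpha-n$ defining $\cH_S$. One then applies divided-difference operators in $\lambda$ along the $W$-orbit directions that become equal on $S$. After grouping equal exponentials this gives, for $\lambda\in S$, a finite sum
\[
\sum_{\text{classes}}P_{\lambda,w}(H)\,\ee^{(w\lambda-\rho)(H)}\sum_{\mu\in\Lambda^+}\widetilde\Gamma_\mu(\lambda,H)\ee^{-\mu(H)},
\]
with polynomial prefactors of degree at most $|W|$. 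We take $\mathcal L_\lambda$ to be the $\mu=0$ part and $\mathcal R_\lambda$ the rest; this gives (1). The coefficients are derivatives of order at most $|W|$ of meromorphic functions whose poles on $S$ lie only on the proper substrata, namely the hyperplanes of $\cH\setminus\cH_S$ meeting $S$, together with the Gamma-factor poles of $c$. Stirling's formula bounds the $c$-function and its derivatives polynomially in $Q_\lambda$ away from poles. Each such pole is at distance at least $\delta$, and Cauchy estimates on polydiscs of radius $\delta/2$ give the bound $(Q_\lambda/\delta)^C$ in (2). Since $\cH$ is finite and $U_0$ compact, finitely many such local charts cover $S\cap\mathcal U$ modulo the imaginary directions. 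In those directions the coefficients are ratios of Gamma functions, so polynomial bounds in $Q_\lambda$ hold uniformly.

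\emph{Step 2: the remainder (4).} We use Gangolli's recursion for $\Gamma_\mu$, in the regularized form of \cite{NPP}. It gives $|\widetilde\Gamma_\mu|\ll (Q_\lambda/\delta)^{C}(1+\operatorname{ht}\mu)^{C}$. The recursion denominators are $\langle\mu,\mu-2\lambda\rangle$, which vanish only on $\cH$, so they are $\geq\delta$ off $\cH_S$. On $\cH_S$ they are treated by the regularization itself. Summing over $\mu\neq0$ with $\ee^{-\mu(H)}\leq \ee^{-\operatorname{ht}(\mu)\min_\alpha\alpha(H)}$ gives a factor $\ee^{-\min\alpha(H)}(Q_\lambda/\delta)^{C}$. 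This is at most $(Q_\lambda/\delta)^{-A}$ once $\min\alpha(H)\geq a_A\log(2+Q_\lambda/\delta)$ with $a_A=A+C+1$. The polynomial prefactors contribute $(1+\norm H)^C$. For the exponentials we use $\operatorname{Re}(w\lambda)(H)\le\max_w$, which gives \eqref{eq:HC-remainder}.

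\emph{Step 3: nonvanishing (3); the main obstacle.} Here cancellation could kill the identity class. We would proceed by induction on the codimension of the stratum. For the open stratum, $P_{\lambda,e}=c(\lambda)$ and $\operatorname{Re}\lambda$ is dominant. Then $c(\lambda)$ has no zeros there, and Stirling gives $|c(\lambda)|\geq Q_\lambda^{-C}$. The Gindikin--Karpelevich product is nonzero for $\operatorname{Re}\lambda_\alpha\geq0$ away from poles, and its poles are excluded by $\delta$. For a deeper stratum $S$, we would approach $\lambda\in S$ by points $\lambda'$ in the adjacent higher stratum at distance about $\delta'\asymp\delta$. We would then compare the two expansions on a fixed box of $H$. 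By \cref{lem:finite-exponential}, the leading part's $L^2$ norm on the box controls its largest coefficient up to $(Q/\delta)^{C}$. That norm is continuous in $\lambda$, because $\varphi_\lambda$ is entire in $\lambda$ with derivatives bounded by $(1+\norm H)^CQ^C$. The remainder is negligible by Step 2, after translating the box deep into the chamber. This transports the lower bound from $\lambda'$ to $\lambda$. The dominance of $\operatorname{Re}\lambda$ is what singles out the identity class as the one whose polynomial is the limit of the divided differences of $c(\lambda')$. The delicate point is to check that the largest coefficient lands in $P_{\lambda,e}$ and not in another merged class. We would first try to handle this by restricting $H$ to a direction on which $\operatorname{Re}(\lambda-w\lambda)(H)$ is strictly positive for $w\lambda\neq\lambda$; this is possible because $\operatorname{Re}\lambda$ is dominant. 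On that direction the identity exponential dominates the other exponentials, so it cannot be cancelled by them. Finiteness of the strata makes all constants uniform.
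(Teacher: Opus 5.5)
The genuine gap is in your Step~3, the proof of part~(3).

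Your Steps~1 and~2 follow the paper's route: regularization by the affine forms of the hyperplanes through $S$ plus finitely many parameter derivatives, Gamma-product and Stirling bounds, and the recursion for large height. For the recursion, a geometric bound $(C(Q_\lambda/\delta)^C)^{1+m}$ already suffices under the logarithmic hypothesis of part~(4). One correction there: an individual denominator $\langle\mu,\mu-2\lambda\rangle$ vanishes on a hyperplane orthogonal to $\mu$. That is not a root hyperplane when $\mu$ is not proportional to a root; for $\mathrm{SL}_3(\R)$ and $\mu=2\alpha_1+\alpha_2$ it vanishes at $\lambda=\rho$. So the fact that the poles of $\Gamma_\mu$ lie in $\cH$ must be imported from \cite{NPP}, not read off from the denominators.

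In Step~3 you transport a lower bound by continuity in the parameter, from a point $\lambda'$ of a larger stratum $S'$ to $\lambda\in S$. But $S$ is a proper substratum of $S'$, and $\lambda'$ lies within $\epsilon=\norm{\lambda-\lambda'}$ of it. So the induction hypothesis at $\lambda'$ only guarantees a coefficient of size $(Q_\lambda/\epsilon)^{-C}$. Then \cref{lem:finite-exponential} costs a further power of $\epsilon$, because the exponents that merge on $S$ are $\epsilon$-close; already $\norm{\ee^{zt}-\ee^{-zt}}_{L^2}\asymp\abs{z}$ with unit coefficients. Your lower bound for the leading part at $\lambda'$ is therefore of order $\epsilon^{C'}$ with $C'\geq1$, times negative powers of $Q_\lambda$. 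The parameter-derivative bound, by contrast, only gives $\norm{\varphi_\lambda-\varphi_{\lambda'}}_{L^2}\ll\epsilon$ times positive powers of $Q_\lambda$ and $\log(Q_\lambda/\epsilon)$. No choice of $\epsilon$ makes the error smaller than the lower bound.

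This is not a matter of constants. As $\lambda'\to S$ the coefficients of the merging exponentials blow up and cancel. To close the transport you would need that blow-up to compensate the separation loss exactly, which is equivalent to the nonvanishing of the regularized limit you are trying to prove. Continuity in $\lambda$ works only in the direction used in \cref{lem:wall-induction}: from a point near a substratum onto the substratum, where a lower bound with exponent independent of the distance is already known. The distance is then chosen to be $X^{-D}$ with $D$ large.

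Your fallback for placing the large coefficient in $P_{\lambda,e}$ also fails in general. Suppose $w\lambda\neq\lambda$ but $w\operatorname{Re}\lambda=\operatorname{Re}\lambda$; this happens whenever $\operatorname{Re}\lambda$ is singular and $\operatorname{Im}\lambda$ is not fixed by its stabilizer, in particular for tempered parameters with $\operatorname{Im}\lambda$ not fixed by $w$. Then $\operatorname{Re}(\lambda-w\lambda)(H)=0$ for all $H$, so no direction makes the identity exponential dominate. Moreover, \cref{lem:finite-exponential} only controls the largest coefficient over all classes.

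The paper avoids both problems with an exact identity instead of a perturbation. By \cite[Corollary~2.10]{NPP}, the highest-degree coefficient of $P_{\lambda,e}$ is a fixed nonzero multiple of the regularized $c$-function $b_0(\lambda)=\pi_0(\lambda)c(\lambda)$, which is neither zero nor infinite on the stratum. In the rank-one model, $c(\pm z)\approx\pm a/z$ combine to give top coefficient $2a=2\lim_{z\to0}zc(z)$. The bound $(Q_\lambda/\delta)^{-C}$ then follows directly from the Gindikin--Karpelevich product, Stirling's formula, and the $\delta$-separation from the hyperplanes not containing $S$, exactly as in your base case. No induction over strata is needed here; the paper's only such induction is the later one in \cref{lem:wall-induction}, which removes the separation hypothesis.
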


\begin{proof}
The regularization at a singular parameter is
\cite[Proposition~2.5]{NPP}, the resulting all-parameter expansion is
\cite[Theorem~2.11]{NPP}, and the nonvanishing of the polynomial
$P_{\lambda,e}$ in part~(3) is \cite[Corollary~2.10]{NPP}.  We record why
the constants in those formulas grow at most polynomially in $Q_\lambda$
and $\delta^{-1}$.

The possible poles are simple and lie in $\cH$ by
\cite[Theorem~1.2 and Lemma~1.9]{NPP}.  On a fixed stratum, the regularizing
polynomials $p$ and $\pi$ of \cite[Proposition~2.5]{NPP} are products of the
affine linear forms defining the singular hyperplanes which contain that
stratum.  Multiplication by $p$ removes the poles along those hyperplanes
before the differential operator $\partial(\pi)$ is applied.  Every other
singular hyperplane is at distance at least $\delta$.  Consequently, the
terms with bounded $\operatorname{ht}(\mu)$, together with the finitely many
parameter derivatives used in the regularization, are bounded by a fixed
power of $Q_\lambda/\delta$.  The gamma-product formula for $c$ and Stirling's
formula give the same polynomial bound as the imaginary part of $\lambda$
grows.

For larger $m=\operatorname{ht}(\mu)$, the recurrence defining the
Harish--Chandra coefficient, recorded in
\cite[equation~(12) and Lemma~1.4]{NPP}, has denominator
\[
 \abs{\langle\mu,\mu-2\lambda\rangle}
 \geq\langle\mu,\mu-2\operatorname{Re}\lambda\rangle
 \gg_G(1+m)^2,
\]
once $m$ is larger than a fixed constant.  There are $O_G(1+m)$ terms in
the recurrence, and their numerators are $O_G(Q_\lambda+m)$.  If
$\Gamma_{\mu,S}$ denotes the coefficient after regularization on $S$,
induction on $m$, followed by induction on the number of parameter
derivatives, gives
\[
 \abs{\partial_\lambda^\kappa\Gamma_{\mu,S}(\lambda)}
 \leq\bigl(C(Q_\lambda/\delta)^C\bigr)^{1+m},
 \qquad \abs\kappa\leq d.
\]
Since
$\mu(H)\gg_Gm\min_{\alpha\in\Delta}\alpha(H)$, each increase in $m$
also supplies exponential decay in $H$.  The logarithmic lower bound in
part~(4) makes this decay larger than the coefficient growth.  The remaining
sum over $m$ is then a geometric series, which gives
\eqref{eq:HC-remainder}.

Finally, in the notation of \cite[Lemma~2.6]{NPP},
$b_0(\lambda)=\pi_0(\lambda)c(\lambda)$ is neither zero nor infinite on the
stratum.  Corollary~2.10 of that paper shows that the highest-degree
coefficient of $P_{\lambda,e}$ is a nonzero constant multiple of
$b_0(\lambda)$.  The product formula for $c$ in
\cite[equation~(19)]{NPP}, Stirling's formula, and the separation from the
remaining hyperplanes give the lower bound in part~(3).
\end{proof}

\subsection{Uniformity at spectral walls}

The previous proposition is uniform as long as the parameter is not too
close to a smaller stratum.  We now remove that condition.  Away from a
smaller stratum we apply the expansion directly.  Near a smaller stratum we
move the parameter onto it and use continuity.  Since there are only
finitely many hyperplanes, this process stops after finitely many steps.

\begin{lemma}[Finite wall induction]\label{lem:wall-induction}
Let $\lambda\in\mathcal U$, choose its Weyl representative so that
$\operatorname{Re}\lambda$ is dominant, and suppose
\eqref{eq:long-radius} holds.  Then, for every $i$,
\begin{equation}\label{eq:tube-L2-lower}
 \int_{R-2}^{R-1}\int_{B_0}
 \abs{\varphi_\lambda
  (\exp(H_{i,\lambda,R}(u)+Z))}^2\dd Z\dd u
 \gg_G Q_\lambda^{-C}R^{-C}
 \ee^{-R+2\operatorname{Re}\lambda
             (H_{i,\lambda,R}(R))}.
\end{equation}
\end{lemma}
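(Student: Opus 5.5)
We argue by a downward induction on strata, combining \cref{prop:uniform-HC} with \cref{lem:finite-exponential}. Write $H=H_{i,\lambda,R}(u)+Z$ with $u\in[R-2,R-1]$ and $Z\in B_0$. By \eqref{eq:root-distance} and \eqref{eq:long-radius}, every simple root satisfies $\alpha(H)\geq a_0'\log(2+Q_\lambda R)$, with $a_0'$ as large as we like once $a_0$ and $A_0$ are large. Let $S$ be the stratum of $\lambda$. Fix a threshold $\delta_k=(Q_\lambda R)^{-b_k}$ for each codimension level $k$, with exponents $b_k$ increasing geometrically in $k$; there are at most $\dim\mathfrak a$ levels.

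\emph{Case 1: $\lambda$ is at distance $\geq\delta=\delta_k$ from every proper substratum of $S$.} Apply \cref{prop:uniform-HC} to write $\varphi_\lambda=\mathcal L_\lambda+\mathcal R_\lambda$. Set $H=uH_i'+Y$ with $u$ and $Z$ as local coordinates, where $x=(u-R,Z)$ ranges over a fixed box $B=[-2,-1]\times B_0$. Then $\ee^{\rho(H)}\mathcal L_\lambda(H)$ becomes a quasipolynomial $E(x)$. Its frequencies are $w\lambda$ restricted to the affine image, and they are bounded by $Q_\lambda$. After shifting by the $\lambda$-frequency, their real parts are bounded by $C_0(G)$. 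We renormalize by dividing by $\ee^{\operatorname{Re}\lambda(H_{i,\lambda,R}(R))}$ and expanding the polynomials around the base point. This introduces polynomial factors $(1+R)^d$, and translates the coefficient lower bound of \cref{prop:uniform-HC}(3) into a lower bound $\gg (Q_\lambda R/\delta)^{-C}$ for the coefficient $\mathcal C(E)$ of the leading frequency. Exponentials $\ee^{(w\lambda-\lambda)(H)}$ with $\operatorname{Re}(w\lambda-\lambda)(H)$ very negative only help: we may discard them into the error, or keep them since their real parts are bounded on the box after normalization.

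The main subtlety is that distinct $w\lambda$ may restrict to frequencies that are close but unequal, which would make the separation in \cref{lem:finite-exponential} small. Separation of the full frequencies $w\lambda$ follows from the distance $\delta$ to proper substrata: two Weyl images agree exactly on the hyperplanes grouped in $S$, and otherwise differ by $\gg\delta$. The restriction to the $(u,Z)$-slice is injective on differences, because the slice contains an open set of directions $Z\in\ker\rho$ together with the $u$-direction, so it spans $\mathfrak a$. \cref{lem:finite-exponential} then gives $\norm{E}_{L^2(B)}\gg(Q_\lambda R/\delta)^{-C}$. \cref{prop:uniform-HC}(4), with $A$ large, makes the remainder smaller by a factor $(Q_\lambda/\delta)^{-A}$ relative to the same normalization. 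Choosing $A$ larger than the accumulated $C$ gives \eqref{eq:tube-L2-lower}, with $\ee^{-R}$ arising from $\ee^{-2\rho(H)}=\ee^{-u}$.

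\emph{Case 2: $\lambda$ is within $\delta_k$ of a proper substratum $S'$.} Let $\lambda'\in S'$ be the nearest point. Then $\norm{\lambda-\lambda'}\leq\delta_k$, and we may keep $\operatorname{Re}\lambda'$ dominant and $\lambda'\in\mathcal U$ after a harmless enlargement of $U_0$. Either $\lambda'$ satisfies Case 1 at level $\delta_{k+1}$ relative to $S'$, or we repeat. Since codimension increases at each step, the process terminates in at most $\dim\mathfrak a$ steps at some $\lambda^\ast$, and Case 1 gives the bound for $\lambda^\ast$ with constant $(Q_\lambda R)^{-Cb_{k+1}}$. To transfer the bound back to $\lambda$, we use the entire dependence of $\varphi_\lambda(\exp H)$ on $\lambda$. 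The Harish-Chandra integral formula gives $\abs{\partial_\lambda\varphi_\lambda(\exp H)}\ll\norm H\,\ee^{-\rho(H)+\max_w\operatorname{Re}w\lambda(H)}$ uniformly on a neighborhood. Hence $\abs{\varphi_\lambda-\varphi_{\lambda^\ast}}\ll R\,\delta_k\,\ee^{-\rho(H)+\operatorname{Re}\lambda(H)+O(R\delta_k)}$, with $R\delta_k\leq1$ once $b_k\geq1$. The exponents $\operatorname{Re}\lambda(H_{i,\lambda,R}(R))$ and $\operatorname{Re}\lambda^\ast(\cdot)$ also differ by $O(R\delta_k)=O(1)$.

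The main obstacle is choosing the exponents. The error $R\delta_k=(Q_\lambda R)^{1-b_k}$ must beat the main term $(Q_\lambda R)^{-Cb_{k+1}}$ from the next level, while $b_k$ must dominate the accumulated losses. Since $\delta_{k+1}\ll\delta_k$ is needed for the termination argument, the condition reads $b_k-1>Cb_{k+1}$. We therefore choose the $b_k$ decreasing: $b_{\text{top}}=1$ at the deepest level, and $b_k=Cb_{k+1}+2$ going up. Finitely many levels keep all exponents bounded by a constant depending on $G$. The constants $a_0$ and $A_0$ are then fixed so that \cref{prop:uniform-HC}(4) applies with $\delta\geq(Q_\lambda R)^{-b_{\max}}$, which requires $\min\alpha(H)\geq a_A(1+b_{\max})\log(2+Q_\lambda R)$. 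This is where $a_0$ is fixed.
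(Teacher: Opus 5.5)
Your proposal is correct and follows essentially the paper's argument. Where $\lambda$ is separated from every proper substratum by a negative power of $Q_\lambda R$, you use the direct estimate from \cref{prop:uniform-HC} and \cref{lem:finite-exponential}; otherwise you move to a nearby substratum and compare through the parameter-derivative bound, with thresholds nested as in the paper's choice $D>A_{<S}+C+2$.

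Your final choice of $b_k$ decreasing with depth is the right one. Your earlier remarks that the $b_k$ increase and that $\delta_{k+1}\ll\delta_k$ is needed for termination contradict it and should be dropped; termination comes from the codimension.

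Two small points should be made explicit, as the paper does. First, do the transfer one step at a time, passing at each step to a Weyl translate with dominant real part. This leaves $\varphi$ unchanged, but the translate need not be close to $\lambda$ in $\mathfrak a_{\mathbb C}^*$, so a single segment back from $\lambda^*$ does not work. Second, take the lower bound at the new parameter on the tube determined by $\lambda$ rather than on its own tube. The paper arranges this with its translated boxes $B_0+Z_0$.
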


\begin{proof}
Put $X=2+Q_\lambda R$ and
\[
 E_{\lambda,i,R}
 =\exp\left(-\frac R2+
       \operatorname{Re}\lambda(H_{i,\lambda,R}(R))\right).
\]
We prove by induction on the dimension of the stratum containing $\lambda$
that the square root of the left-hand side of
\eqref{eq:tube-L2-lower} is at least
$X^{-A_S}E_{\lambda,i,R}$ for some exponent $A_S$ depending only on the
stratum $S$.  Since $\cH$ is finite, the exponents may be replaced at the
end by one constant depending only on $G$.

During the induction we also allow the box $B_0$ to be translated by the
small displacements produced when the parameter is moved to a substratum.
An induction chain has length at most $\#\cH$, so these translations remain
in a fixed compact family of boxes contained in $B_1$; bounded values of $X$
are absorbed into the final constant.  The direct estimate below is uniform
over this family because \cref{lem:finite-exponential} is uniform over a
compact family of translated boxes.
We denote the accumulated translation by $Z_0$; at the start of the
induction $Z_0=0$.

We first give the direct argument.  Write $s=u-R$ and
$H_R=H_{i,\lambda,R}(R)$.  Then
\[
 H_{i,\lambda,R}(u)+Z=H_R+sH_i+Z,
 \qquad -2\leq s\leq-1.
\]
Suppose that $\lambda$ is at distance at least $\delta$ from every proper
substratum of $S$.  By \cref{prop:uniform-HC}, the leading part can be
written as
\[
 \mathcal L_\lambda(H_R+sH_i+Z)
 =\ee^{(\lambda-\rho)(H_R)}
   \sum_{j=1}^m \widetilde P_j(s,Z)
          \ee^{\zeta_j(s,Z)},
\]
where $m$ and the degrees of the polynomials $\widetilde P_j$ are bounded in
terms of $G$.  More explicitly, a term
$P(H)\ee^{(w\lambda-\rho)(H)}$ contributes
\[
 \widetilde P_j(s,Z)
 =P(H_R+sH_i+Z)\ee^{(w\lambda-\lambda)(H_R)},
 \qquad
 \zeta_j(s,Z)=(w\lambda-\rho)(sH_i+Z).
\]
Because $\operatorname{Re}\lambda$ is dominant and $H_R$ is in the positive
chamber,
$\operatorname{Re}(w\lambda-\lambda)(H_R)\leq0$.  Translation of the
polynomials by $H_R$ changes their coefficient norms by at most a fixed
power of $R$.  Part~(3) of \cref{prop:uniform-HC} therefore implies that at
least one coefficient in the displayed exponential polynomial has absolute
value at least $(X/\delta)^{-C}$.

The decomposition
$\mathfrak a=\mathbb RH_i\oplus\ker\rho$ shows that restriction to the
variables $(s,Z)$ is an isomorphism on linear functionals.  Consequently,
after equal exponentials are grouped, distinct $\zeta_j$ are separated by
$c_G\delta$.  Their real parts are bounded and their absolute values are
$O_G(Q_\lambda)$.  Applying \cref{lem:finite-exponential} gives
\begin{equation}\label{eq:leading-L2}
 \norm{\mathcal L_\lambda}_{L^2([-2,-1]\times(B_0+Z_0))}
 \gg_G X^{-C}\delta^C E_{\lambda,i,R}.
\end{equation}
Here and below the functions are evaluated at $H_R+sH_i+Z$.
Choose the exponent $A$ in \eqref{eq:HC-remainder} larger than the fixed
powers in \eqref{eq:leading-L2}.  Since
$\min_{\alpha\in\Delta}\alpha(H_R+sH_i+Z)\gg_G a_0\log X$,
choosing $a_0$ sufficiently large makes the hypothesis of
\eqref{eq:HC-remainder} valid and gives
\[
 \norm{\mathcal R_\lambda}_{L^2([-2,-1]\times(B_0+Z_0))}
 \leq\frac12
 \norm{\mathcal L_\lambda}_{L^2([-2,-1]\times(B_0+Z_0))}.
\]
This proves the desired estimate whenever $\lambda$ is
$\delta$-separated from the proper substrata.  For a stratum with no proper
substrata, take $\delta=1$; this starts the induction.

Now assume the result for every proper substratum of $S$, with a common
exponent $A_{<S}$.  The spherical function is entire in its parameter.  If
$v$ is a unit direction in $\mathfrak a_{\mathbb C}^*$, differentiation of
the spherical integral formula introduces the factor
$v(A(k\exp H))$, where $A(k\exp H)$ is the Iwasawa projection.  Since
$\norm{A(k\exp H)}\ll_G\norm H$, the integral formula and the standard
spherical-function estimate \cite[Proposition~4.6.1]{GV} give, uniformly
for $\zeta\in\mathcal U$,
\begin{equation}\label{eq:parameter-derivative}
 \abs{\partial_v\varphi_\zeta(\exp H)}
 \ll_G(1+\norm H)^C
 \ee^{-\rho(H)+\max_w\operatorname{Re}(w\zeta)(H)};
\end{equation}
see \cite[Chapter~IV, Section~4]{Helgason}.  Choose
$D>A_{<S}+C+2$ and put $\delta=X^{-D}$.  If $\lambda$ is
$\delta$-separated from every proper substratum, the direct argument proves
the claim, with a new exponent which may depend on $D$.

Otherwise choose $\lambda_0'$ on a proper substratum with
$\norm{\lambda-\lambda_0'}\ll_G\delta$.  Choose $w\in W$ so that
$\operatorname{Re}(w\lambda_0')$ is dominant and put
$\lambda_0=w\lambda_0'$.  The distance between two dominant representatives
is their quotient distance.  Since $\operatorname{Re}\lambda$ is already
dominant,
\[
 \norm{\operatorname{Re}\lambda_0-
       \operatorname{Re}\lambda}
 =d_W(\operatorname{Re}\lambda_0',\operatorname{Re}\lambda)
 \leq\norm{\operatorname{Re}\lambda_0'-
       \operatorname{Re}\lambda}
 \ll_G\delta.
\]
Moreover, $\varphi_{\lambda_0}=\varphi_{\lambda_0'}$ and
$Q_{\lambda_0}\asymp_GQ_\lambda$, so \eqref{eq:long-radius} also holds at
$\lambda_0$ after a fixed change of constants.  Along the segment joining $\lambda$ to
$\lambda_0'$, the exponent on the right of
\eqref{eq:parameter-derivative} differs from
$-\rho(H)+\operatorname{Re}\lambda(H)$ by $O_G(R\delta)$.  The mean-value
theorem therefore gives
\begin{equation}\label{eq:wall-comparison}
 \norm{\varphi_\lambda-\varphi_{\lambda_0}}_{L^2}
 \ll_G X^{-D+C}E_{\lambda,i,R},
\end{equation}
where the norm is taken over $-2\leq s\leq-1$ and $Z\in B_0+Z_0$.

The difference
\[
 H_{i,\lambda,R}(u)-H_{i,\lambda_0,R}(u)
 =(\tau_{\lambda,R}-\tau_{\lambda_0,R})(H_\circ-H_i)
\]
is independent of $u$, belongs to $\ker\rho$, and is $O_G(\delta)$.  It is
therefore absorbed by the allowed translation $Z_0$.  The induction
hypothesis at $\lambda_0$ gives
\[
 \norm{\varphi_{\lambda_0}}_{L^2}
 \gg_G X^{-A_{<S}}E_{\lambda,i,R};
\]
the exponent at $\lambda_0$ is comparable with the one displayed because
their logarithms differ by $O_G(R\delta)$.  Our choice of $D$ makes
\eqref{eq:wall-comparison} at most half this lower bound.  This completes the
induction.  Squaring the resulting estimate proves
\eqref{eq:tube-L2-lower}.  Finally, since only finitely many strata occur,
$a_0$ may be chosen once so that the remainder estimate used above holds in
every direct step.  We then enlarge $A_0$ so that
\eqref{eq:long-radius} implies $R-3\geq2\tau_{\lambda,R}$.
\end{proof}

\begin{remark}
The polynomial $P_{\lambda,e}$ in part~(3) of
\cref{prop:uniform-HC} is essential.  Its quantitative nonvanishing supplies
the coefficient to which \cref{lem:finite-exponential} is applied; the size
of an exponential alone would not give a lower bound for their sum.
\end{remark}

\subsection{The mass estimate}

For a unitary parameter $\lambda$, write $p_\lambda=p(\pi_\lambda)$.

\begin{proposition}\label{prop:critical-mass}
There are $B,C>0$, depending only on $G$, with the following property.  Let
$\vartheta=\vartheta(\lambda)>0$ and suppose
\begin{equation}\label{eq:critical-hypotheses}
 R\vartheta\geq C\log(2+Q_\lambda R).
\end{equation}
Then a critical index $i$ satisfies
\begin{equation}\label{eq:critical-mass}
 I_{\lambda,i,R}:=
 \int_G\chi_{\lambda,i,R}(g)\ee^{\ell(g)/p_\lambda}
       \abs{\varphi_\lambda(g)}^2\dd g
 \gg_GQ_\lambda^{-C}R^{-C}
       \ee^{R(1-1/p_\lambda)}.
\end{equation}
If $\nu$ is a spherical unitary parameter and
\begin{equation}\label{eq:nearby-parameter}
 d_W(\nu,\lambda)\leq Q_\lambda^{-B}R^{-B},
\end{equation}
then, for
\[
 \beta_{\lambda,i,R}(g)
 =\chi_{\lambda,i,R}(g)\ee^{\ell(g)/p_\lambda}
       \overline{\varphi_\lambda(g^{-1})},
\]
one has
\begin{equation}\label{eq:nearby-transform}
 \abs{\widehat\beta_{\lambda,i,R}(\nu)}
 \gg_GQ_\lambda^{-C}R^{-C}
       \ee^{R(1-1/p_\nu)}.
\end{equation}
Finally, on the support of $\chi_{\lambda,i,R}$,
\begin{equation}\label{eq:endpoint-upper}
 \ee^{\ell(g)/p_\lambda}\abs{\varphi_\lambda(g)}
 \ll_GR^C Q_\lambda^C.
\end{equation}
\end{proposition}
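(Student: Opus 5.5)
We prove the three claims in the order \eqref{eq:critical-mass}, \eqref{eq:endpoint-upper}, \eqref{eq:nearby-transform}, all in Cartan coordinates.

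\emph{Mass bound.} The hypothesis \eqref{eq:critical-hypotheses} with $C$ large, together with $\vartheta\leq1$, gives \eqref{eq:long-radius}. So \cref{lem:wall-induction} applies at the critical index $i$. Write $g=k\exp(H_{i,\lambda,R}(u)+Z)k'$. The cutoff equals one on the region $R-2\leq u\leq R-1$, $Z\in B_1\supset B_0$. On this region $\ell(g)=u$, and the Jacobian obeys $J\asymp \ee^u$ by \eqref{eq:jacobian-tube}. The change of variables $H\mapsto(u,Z)$ is linear with constant Jacobian. Hence
\[
I_{\lambda,i,R}\gg_G \ee^{R/p_\lambda}\ee^{R}\int_{R-2}^{R-1}\int_{B_0}\abs{\varphi_\lambda}^2\dd Z\dd u .
\]
By \eqref{eq:tube-L2-lower} this is $\gg Q_\lambda^{-C}R^{-C}\ee^{R/p_\lambda}\ee^{2\operatorname{Re}\lambda(H_R)}$. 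Now \eqref{eq:critical-loss} gives $2\operatorname{Re}\lambda(H_R)\geq\vartheta R-C\log X$, and $1/p+\vartheta=1-1/p$. This yields \eqref{eq:critical-mass}.

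\emph{Endpoint upper bound.} On the support we have $\ell(g)\leq R$. The standard bound (the case of \eqref{eq:parameter-derivative} without derivative, i.e.\ \cite[Proposition~4.6.1]{GV}) gives $\abs{\varphi_\lambda(\exp H)}\ll(1+\norm H)^C\ee^{-\rho(H)+\max_w\operatorname{Re}(w\lambda)(H)}$. Here $\max_w\operatorname{Re}(w\lambda)(H)\leq\vartheta\rho(H)$ by the definition of $\vartheta$ as the maximum of the polyhedral ratio over the chamber (\cref{lem:p-poly}). Hence $\ee^{\ell/p}\abs{\varphi_\lambda}\ll R^C\ee^{\ell(1/p-1/2+\vartheta/2)}=R^C$, since $\vartheta/2=1/2-1/p$. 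So no factor of $Q_\lambda$ is even needed.

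\emph{Nearby transform.} We have $\widehat\beta_{\lambda,i,R}(\nu)=\int\chi\,\ee^{\ell/p_\lambda}\overline{\varphi_\lambda(g^{-1})}\varphi_\nu(g^{-1})\dd g$. At $\nu=\lambda$ (unitary, so $\varphi_\lambda(g^{-1})=\overline{\varphi_\lambda(g)}$ after the correct representative) this is $I_{\lambda,i,R}$, a positive integral; note the integrand is $\abs{\varphi_\lambda}^2$ only up to this identification, which should be checked. Replace $\nu$ by the representative $w\nu$ with $\norm{w\nu-\lambda}=d_W(\nu,\lambda)$. The mean value theorem with \eqref{eq:parameter-derivative} gives $\abs{\varphi_\nu-\varphi_\lambda}\ll R^C\delta'\,\ee^{-\rho(H)+\vartheta\rho(H)+O(R\delta')}$ on the support, where $\delta'=Q_\lambda^{-B}R^{-B}$. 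Integrating against $\ee^{u}$ and the bound \eqref{eq:endpoint-upper} bounds the error by $R^C\delta'\ee^{R(1-1/p_\lambda)}$. For $B$ large, this is at most half the main term from \eqref{eq:critical-mass}.

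It remains to replace $p_\lambda$ by $p_\nu$. Since $\vartheta$ is Lipschitz in the real part, $\abs{1/p_\nu-1/p_\lambda}\ll\delta'$, so $\ee^{R\abs{1/p_\nu-1/p_\lambda}}\ll1$. The main obstacle is this perturbation step: the error must be controlled on the whole support of $\chi$, not only on the core box. The key input there is \eqref{eq:endpoint-upper}, which gives a bound uniform in $H$. Everything else is bookkeeping of powers of $X=2+Q_\lambda R$.
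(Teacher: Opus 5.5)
Your proposal is correct and follows essentially the same route as the paper. The hypothesis gives the long-radius condition. The wall-induction lemma, together with the critical-loss bound and the Jacobian on the tube, gives the mass estimate. The Gangolli--Varadarajan bound combined with $\max_w\operatorname{Re}(w\lambda)(H)\leq\vartheta\rho(H)$ gives \eqref{eq:endpoint-upper}. A mean-value argument in the parameter, plus Lipschitz continuity of $\vartheta$, handles nearby $\nu$.

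The differences are cosmetic. You apply the mean-value theorem pointwise to $\varphi_\zeta(g^{-1})$ before integrating, rather than to $\widehat\beta_{\lambda,i,R}$ itself. You also correctly observe that the factor $Q_\lambda^C$ in \eqref{eq:endpoint-upper} is not actually needed.
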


\begin{proof}
We first need an upper bound.  The standard uniform estimate for the
elementary spherical function is
\begin{equation}\label{eq:spherical-endpoint}
 \abs{\varphi_\lambda(\exp H)}
 \ll_G(1+\norm H)^C
 \ee^{-\rho(H)+\max_w\operatorname{Re}(w\lambda)(H)}
\end{equation}
see \cite[Proposition~4.6.1]{GV}.  By \cref{lem:p-poly},
\[
 \max_w\operatorname{Re}(w\lambda)(H)
 \leq\vartheta(\lambda)\rho(H)
 \qquad(H\in\overline{\mathfrak a^+}).
\]
Since $1-\vartheta(\lambda)=2/p_\lambda$ and
$\ell(\exp H)=2\rho(H)$, the exponential in
\eqref{eq:spherical-endpoint} is at most
$\ee^{-\ell(\exp H)/p_\lambda}$.  On the support of the cutoff, the
remaining polynomial is $O_G(Q_\lambda^CR^C)$.  This proves
\eqref{eq:endpoint-upper}.

The constant in \eqref{eq:critical-hypotheses} is chosen large enough that
\eqref{eq:long-radius} holds.  For a critical index $i$,
\cref{lem:wall-induction} and \eqref{eq:critical-loss} give
\[
 \int_{R-2}^{R-1}\int_{B_0}
 \abs{\varphi_\lambda
  (\exp(H_{i,\lambda,R}(u)+Z))}^2\dd Z\dd u
 \gg_GQ_\lambda^{-C}R^{-C}\ee^{-(1-\vartheta)R}.
\]
On the region of integration, \eqref{eq:jacobian-tube} gives
$J(H)\asymp_G\ee^R$ and
$\ee^{\ell(g)/p_\lambda}\asymp_G\ee^{R/p_\lambda}$.  Hence
\[
 I_{\lambda,i,R}
 \gg_GQ_\lambda^{-C}R^{-C}
 \ee^{-(1-\vartheta)R+R+R/p_\lambda}
 =Q_\lambda^{-C}R^{-C}\ee^{R(1-1/p_\lambda)}.
\]

The definition of the spherical transform gives, at $\nu=\lambda$,
\[
 \widehat\beta_{\lambda,i,R}(\lambda)=I_{\lambda,i,R}.
\]
Indeed,
$\overline{\varphi_\lambda(g^{-1})}\varphi_\lambda(g^{-1})
=\abs{\varphi_\lambda(g)}^2$ for a unitary spherical parameter.
Choose the Weyl representative of $\nu$ for which
$\norm{\nu-\lambda}=d_W(\nu,\lambda)$, and let $\zeta$ lie on the segment
from $\lambda$ to $\nu$.  Since \eqref{eq:theta-poly} is the maximum of
finitely many linear functions,
\[
 \abs{\vartheta(\zeta)-\vartheta(\lambda)}
 \ll_G\norm{\zeta-\lambda}.
\]
For $B\geq1$, condition \eqref{eq:nearby-parameter} therefore implies, on
the support of $\beta_{\lambda,i,R}$,
\[
 -\rho(H)+\max_w\operatorname{Re}(w\zeta)(H)
 \leq-\frac{\ell(\exp H)}{p_\lambda}+O_G(1).
\]
Differentiating the spherical integral formula as in
\cite[Chapter~IV, Section~4]{Helgason} introduces a factor of size
$O_G(1+\norm H)$; the absolute value of the resulting integral is therefore
at most $O_G(1+\norm H)\varphi_{\operatorname{Re}\zeta}(\exp H)$.
Hence \eqref{eq:spherical-endpoint} and
\eqref{eq:endpoint-upper} give
\[
 \abs{\beta_{\lambda,i,R}(g)}
 \norm{\nabla_\zeta\varphi_\zeta(g^{-1})}
 \ll_GQ_\lambda^CR^C\ee^{-\ell(g)/p_\lambda}
\]
on this support.  Cartan integration gives
$\vol\{g:\ell(g)\leq R\}\ll_GR^C\ee^R$.  Integration now yields
\begin{equation}\label{eq:transform-derivative}
 \norm{\nabla_\zeta\widehat\beta_{\lambda,i,R}(\zeta)}
 \ll_GQ_\lambda^CR^C\ee^{R(1-1/p_\lambda)}
\end{equation}
along the segment; here $\nabla_\zeta$ is the gradient for the fixed inner
product on the parameter space.  Choose $B$ larger than the sum of the fixed
powers in \eqref{eq:critical-mass} and
\eqref{eq:transform-derivative}.  The mean-value theorem and
\eqref{eq:nearby-parameter} then give
\[
 \abs{\widehat\beta_{\lambda,i,R}(\nu)
       -\widehat\beta_{\lambda,i,R}(\lambda)}
 \leq\frac12 I_{\lambda,i,R}.
\]
Finally, the Lipschitz estimate for $\vartheta$ and \cref{lem:p-poly} give
\[
 \abs{p_\nu^{-1}-p_\lambda^{-1}}
 \ll_G d_W(\nu,\lambda),
 \qquad
 R\abs{p_\nu^{-1}-p_\lambda^{-1}}=O_G(1).
\]
Thus replacing $p_\lambda$ by $p_\nu$ changes the final exponential by only
a bounded factor.
\end{proof}

\section{Positive kernels}

We now use the mass estimate to build a positive test function for the
pre-trace formula.  This is the construction of
\cite[Section~2.3]{OhDensity}; compare also the truncated kernel in
\cite{LutskoDensity}.  The preceding section allows us to place the test
function in a critical Cartan direction without losing uniformity in the
spectral parameter.

For the index $i$ supplied by \cref{prop:critical-mass}, define
\[
       \Phi_{\lambda,i,R}=\beta_{\lambda,i,R}^* *
                            \beta_{\lambda,i,R}.
\]

\begin{lemma}
Under the hypotheses of \cref{prop:critical-mass},
\begin{align}
 \supp\Phi_{\lambda,i,R}
 &\subset\{g:\ell(g)\leq2R+C\},                    \label{eq:Phi-support}\\
 \abs{\Phi_{\lambda,i,R}(g)}
 &\ll_GQ_\lambda^CR^C
       \ee^{R-\ell(g)/2}
       \mathbf1_{\ell(g)\leq2R+C},                 \label{eq:Phi-upper}\\
 \widehat\Phi_{\lambda,i,R}(\nu)
 &\gg_GQ_\lambda^{-C}R^{-C}
       \ee^{2R(1-1/p_\nu)}                         \label{eq:Phi-lower}
\end{align}
whenever \eqref{eq:nearby-parameter} holds.  Moreover,
$\pi(\Phi_{\lambda,i,R})\geq0$ for every unitary representation $\pi$.
\end{lemma}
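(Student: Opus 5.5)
The four claims are of different kinds, so I will handle them separately.

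\emph{Positivity.} Since $\Phi_{\lambda,i,R}=\beta^**\beta$ with $\beta=\beta_{\lambda,i,R}$, for any unitary $\pi$ we have $\pi(\Phi)=\pi(\beta^*)\pi(\beta)=\pi(\beta)^*\pi(\beta)\geq0$. This uses only that $\pi$ is a $*$-representation of $L^1(G)$ under the stated conventions for $*$ and $f^*$.

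\emph{Transform lower bound.} The function $\beta$ is $K$-bi-invariant, because $\chi_{\lambda,i,R}$, $\ell$ and $\varphi_\lambda$ are. On the spherical unitary dual this gives $\widehat\Phi(\nu)=\abs{\widehat\beta(\nu)}^2$. Squaring \eqref{eq:nearby-transform} then gives \eqref{eq:Phi-lower} at once.

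\emph{Support.} The support of $\beta$ lies in $\{R-3\leq\ell\leq R\}$, and $\beta^*(g)=\overline{\beta(g^{-1})}$ has the same support because $\ell(g^{-1})=\ell(g)$. The integrand $\beta^*(h)\beta(h^{-1}g)$ is nonzero only if $\ell(h)\leq R$ and $\ell(h^{-1}g)\leq R$. The triangle inequality \eqref{eq:length-properties} then gives $\ell(g)\leq 2R$, which proves \eqref{eq:Phi-support} with room to spare.

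\emph{Pointwise upper bound.} This is the main step. By \eqref{eq:endpoint-upper} and $\abs{\varphi_\lambda(g^{-1})}=\abs{\varphi_\lambda(g)}$,
\[
\abs{\beta(g)}\ll Q_\lambda^CR^C\ee^{-\ell(g)/p_\lambda}\ee^{\ell(g)/p_\lambda}\cdot\ee^{-\ell(g)/p_\lambda}
\]
is not the form I want. Instead I will bound it crudely by a majorant of the shape
\[
\abs{\beta(g)}\ll Q_\lambda^CR^C\,\mathbf1_{R-3\leq\ell(g)\leq R}.
\]
That crude bound gives
\[
\abs{\Phi(g)}\leq Q^{2C}R^{2C}\vol\{h:\ell(h)\leq R,\ \ell(h^{-1}g)\leq R\}.
\]
A uniform volume bound $\ll R^C\ee^R$ is too weak by the factor $\ee^{-\ell(g)/2}$. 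I expect this to be the main obstacle, and I would attack it with the convolution estimate of \cite{GK} (the one cited in the introduction). That estimate says the $K$-bi-invariant convolution of two shell indicators of radius $R$, evaluated at $g$, is
\[
\ll R^C\ee^{R-\ell(g)/2}.
\]
I would derive it, or at least check it, using the fact that $\ee^{\ell(g)/2}\Xi(g)$ is polynomially bounded, where $\Xi=\varphi_0$ is Harish-Chandra's function. Write $\mathbf1_{S_R}\leq \ee^{R/2}R^{C}\Xi\,\mathbf1_{S_R}$, up to polynomial factors on the shell $S_R$. Then use
\[
\int_K\Xi(xky)\dd k=\Xi(x)\Xi(y),
\]
which is the spherical functional equation. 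Combined with the bound
\[
\int_{\ell(h)\leq R}\Xi(h)^2\dd h\ll R^C,
\]
this gives
\[
(\Xi\mathbf1_{S_R})*(\Xi\mathbf1_{S_R})(g)\ll R^C\,\Xi(g).
\]
The route is: write the convolution as an integral over $h$ in the shell of $\Xi(h^{-1})\Xi(h^{-1}g)$; average over $K$ using bi-invariance; apply the functional equation; then apply Cauchy--Schwarz to the remaining $h$-integral. Multiplying by the prefactor $\ee^{R}$ and using $\Xi(g)\ll R^C\ee^{-\ell(g)/2}$ yields \eqref{eq:Phi-upper}.

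For the positive ("$\gg$") and polynomial bounds I will use the Harish-Chandra estimate $\Xi(\exp H)\asymp \ee^{-\rho(H)}\times$(polynomial in $H$). This is \eqref{eq:spherical-endpoint} at $\lambda=0$ together with the standard lower bound.
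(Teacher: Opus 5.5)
Your proposal is correct and follows the paper's proof step for step. Positivity comes from $\pi(\Phi_{\lambda,i,R})=\pi(\beta_{\lambda,i,R})^*\pi(\beta_{\lambda,i,R})$, the lower bound from $\widehat\Phi_{\lambda,i,R}=\abs{\widehat\beta_{\lambda,i,R}}^2$ and \eqref{eq:nearby-transform}, and the support from the triangle inequality \eqref{eq:length-properties}. The upper bound comes from the crude majorant $\abs{\beta_{\lambda,i,R}}\ll Q_\lambda^CR^C\mathbf1_{\ell\leq R}$ fed into the Golubev--Kamber shell convolution estimate (their Lemma~4.18). Your sketched derivation of that estimate, via $\mathbf1_{\ell\leq R}\leq\ee^{R/2}\Xi$, $K$-averaging and $\int_K\Xi(xky)\dd k=\Xi(x)\Xi(y)$, is also sound: it gives directly $\Xi(g)\int_{\ell(h)\leq R}\Xi(h)^2\dd h\ll R^C\,\Xi(g)$, so the Cauchy--Schwarz step is not needed.
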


\begin{proof}
By the definition of convolution,
\[
 \Phi_{\lambda,i,R}(g)
 =\int_G\overline{\beta_{\lambda,i,R}(h^{-1})}
            \beta_{\lambda,i,R}(h^{-1}g)\dd h.
\]
If the integrand is nonzero, then
$h^{-1}$ and $h^{-1}g$ lie in the support of $\beta_{\lambda,i,R}$.
Consequently, \eqref{eq:length-properties} gives
\[
 \ell(g)\leq\ell(h)+\ell(h^{-1}g)
            =\ell(h^{-1})+\ell(h^{-1}g)\leq2R+C,
\]
which proves \eqref{eq:Phi-support}.  By
\eqref{eq:endpoint-upper},
\[
 \abs{\beta_{\lambda,i,R}(g)}
 \ll Q_\lambda^CR^C\mathbf1_{\ell(g)\leq R+C}.
\]
The Cartan convolution estimate
\[
 \mathbf1_{\ell\leq R+C}*\mathbf1_{\ell\leq R+C}(g)
 \ll_G R^C\ee^{R-\ell(g)/2}
       \mathbf1_{\ell(g)\leq2R+C}
\]
is \cite[Lemma 4.18]{GK}, and gives \eqref{eq:Phi-upper}.  On the spherical
dual,
\[
 \widehat\Phi_{\lambda,i,R}(\nu)
 =\abs{\widehat\beta_{\lambda,i,R}(\nu)}^2,
\]
so \eqref{eq:Phi-lower} follows from
\eqref{eq:nearby-transform}.  For an arbitrary unitary representation,
$\pi(\beta^*)=\pi(\beta)^*$ and hence
\[
 \pi(\Phi_{\lambda,i,R})
 =\pi(\beta_{\lambda,i,R})^*\pi(\beta_{\lambda,i,R}).
\]
This is a positive semidefinite operator, which is what
$\pi(\Phi_{\lambda,i,R})\geq0$ means.
\end{proof}

We now cover a spectral family.  For $L\geq0$ and $\eta>0$, let
\[
 \cA_{L,\eta}=\{\lambda:\pi_\lambda\in\widehat G_{\mathrm{sph}},
       \ \Lambda(\pi_\lambda)\leq L,
       \ \vartheta(\lambda)\geq\eta\}.
\]
This is a compact subset of the finite-dimensional parameter space
$\mathfrak a_{\mathbb C}^*/W$, and $Q_\lambda\ll_G(1+L)^{1/2}$ on it by
\eqref{eq:height-comparison}.

\begin{proposition}\label{prop:positive-family}
Suppose
\begin{equation}\label{eq:family-radius}
 R\eta\geq C\log((2+L)(2+R)).
\end{equation}
There is
$F_{R,L,\eta}=F_{R,L,\eta}^*\in C_c^\infty(K\backslash G/K)$ such that
$\pi(F_{R,L,\eta})\geq0$ for every unitary $\pi$ and
\begin{align}
 \abs{F_{R,L,\eta}(g)}
 &\ll_G(1+L)^CR^C
       \ee^{R-\ell(g)/2}
       \mathbf1_{\ell(g)\leq2R+C},                 \label{eq:family-upper}\\
 \widehat F_{R,L,\eta}(\nu)
 &\gg_G(1+L)^{-C}R^{-C}
       \ee^{2R(1-1/p_\nu)}
       \qquad(\nu\in\cA_{L,\eta}).               \label{eq:family-lower}
\end{align}
\end{proposition}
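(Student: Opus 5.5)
The plan is to take $F_{R,L,\eta}$ to be a finite sum of the kernels $\Phi_{\lambda,i,R}$ over a polynomially sized net of $\cA_{L,\eta}$. Every spherical parameter $\nu$ in $\cA_{L,\eta}$ should then lie close enough to some net point $\lambda$ that \eqref{eq:nearby-parameter} holds, so \eqref{eq:Phi-lower} applies to it. Positivity makes the other summands harmless in the lower bound, and the upper bound only picks up the size of the net.

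\emph{Choice of radius and hypotheses.} On $\cA_{L,\eta}$ we have $Q_\lambda\ll_G(1+L)^{1/2}$ by \eqref{eq:height-comparison}, and $\vartheta(\lambda)\geq\eta$. Hence \eqref{eq:family-radius}, with $C$ large compared with the constant in \eqref{eq:critical-hypotheses}, implies \eqref{eq:critical-hypotheses} for every $\lambda\in\cA_{L,\eta}$. This uses $\log(2+Q_\lambda R)\ll\log((2+L)(2+R))$. So \cref{prop:critical-mass} and the subsequent lemma apply at each $\lambda\in\cA_{L,\eta}$ with a critical index $i=i(\lambda)$.

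\emph{The net.} Put $r=c_G(2+L)^{-B}(2+R)^{-B}$. This radius is at most $Q_\lambda^{-B}R^{-B}$ uniformly on $\cA_{L,\eta}$, with $B$ from \cref{prop:critical-mass}. The real parts of points of $\cA_{L,\eta}$ lie in the compact set $\operatorname{conv}(W\rho)$, and the imaginary parts have norm $\ll(1+L)^{1/2}$. So the dominant representatives of $\cA_{L,\eta}$ lie in a set of diameter $\ll(1+L)^{1/2}$ in a real space of dimension $2\dim\mathfrak a$. We choose a maximal $r$-separated subset $\{\lambda_1,\dots,\lambda_n\}\subset\cA_{L,\eta}$ with respect to $d_W$. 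It is an $r$-net and has $n\ll_G(1+L)^{C}(2+R)^{C}$. Choosing the net points inside $\cA_{L,\eta}$ matters: \cref{prop:critical-mass} needs $\lambda$ to be unitary with $\vartheta(\lambda)\geq\eta$.

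\emph{Definition and verification.} We set
\[
 F_{R,L,\eta}=\sum_{j=1}^n\Phi_{\lambda_j,i(\lambda_j),R}.
\]
Each summand is $K$-bi-invariant, smooth and compactly supported, since $\chi$ is smooth and $\varphi_\lambda$ is analytic. Each summand also satisfies $\Phi^*=(\beta^**\beta)^*=\beta^**\beta$, so $F^*=F$. Each $\pi(\Phi_{\lambda_j,i,R})$ is positive semidefinite, and therefore so is $\pi(F)$. The upper bound \eqref{eq:family-upper} follows by summing \eqref{eq:Phi-upper}: it costs the factor $n$, which is absorbed into $(1+L)^C R^C$. For the lower bound, take $\nu\in\cA_{L,\eta}$ and pick $j$ with $d_W(\nu,\lambda_j)\leq r$. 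On the spherical dual every summand has $\widehat\Phi(\nu)=\abs{\widehat\beta(\nu)}^2\geq0$, so
\[
 \widehat F(\nu)\geq\widehat\Phi_{\lambda_j,i,R}(\nu)\gg Q_{\lambda_j}^{-C}R^{-C}\ee^{2R(1-1/p_\nu)}\geq(1+L)^{-C}R^{-C}\ee^{2R(1-1/p_\nu)}
\]
by \eqref{eq:Phi-lower}.

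\emph{Main obstacle.} The delicate point is uniformity. The exponents $B$ and $C$ in \cref{prop:critical-mass} must be independent of $\lambda$, so that one mesh $r$ works across all of $\cA_{L,\eta}$ and the net count stays polynomial. The polynomial count in turn relies on $\cA_{L,\eta}$ being bounded polynomially in $L$, which \eqref{eq:height-comparison} supplies.
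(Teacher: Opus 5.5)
Your proposal is correct and follows the paper's proof essentially verbatim. Both arguments use a polynomially sized $d_W$-net of $\cA_{L,\eta}$ with points taken inside $\cA_{L,\eta}$ and set $F_{R,L,\eta}=\sum_j\Phi_{\lambda_j,i_j,R}$; positivity of every summand then gives the lower bound, and summing \eqref{eq:Phi-upper} gives the upper bound. Your explicit mesh $c_G(2+L)^{-B}(2+R)^{-B}$ is, if anything, slightly more careful than the paper's $r_0=(1+L)^{-B}R^{-B}$ in guaranteeing \eqref{eq:nearby-parameter} uniformly on $\cA_{L,\eta}$.
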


\begin{proof}
Set $r_0=(1+L)^{-B}R^{-B}$.  An $r_0$-net is a finite set
$\{\lambda_j\}\subset\cA_{L,\eta}$ such that every
$\nu\in\cA_{L,\eta}$ satisfies $d_W(\nu,\lambda_j)\leq r_0$ for some $j$.
Since the parameter space has fixed dimension and $\cA_{L,\eta}$ has
diameter $O_G((1+L)^{1/2})$, a volume comparison gives a net whose
cardinality is at most a fixed power of $(1+L)R$.  For each net point,
\eqref{eq:family-radius} implies the hypothesis of
\cref{prop:critical-mass}; choose a critical index $i_j$ and set
\[
                  F_{R,L,\eta}=\sum_j
                     \Phi_{\lambda_j,i_j,R}.
\]
Every summand is positive.  Given $\nu\in\cA_{L,\eta}$, choose a net point
within distance $r_0$.  With $B$ large enough, this is the neighborhood in
\eqref{eq:nearby-parameter}, and \eqref{eq:Phi-lower} gives
\eqref{eq:family-lower}.  Summing \eqref{eq:Phi-upper} gives
\eqref{eq:family-upper}.  The number of net points and all spectral-height
losses are absorbed by the fixed powers of $(1+L)R$.
\end{proof}

\section{Proof of the density theorem}\label{sec:density-proof}

\begin{proof}[Proof of \cref{thm:main}]
Fix $\epsilon>0$ and put
\[
                              \theta=1-\frac2p.
\]
This is the threshold for $\vartheta(\lambda)$ corresponding to the
condition $p(\pi_\lambda)\geq p$.  If
$\theta\leq\epsilon/4$, then $\alpha\theta\leq\epsilon/4$, and the local
Weyl estimate \eqref{eq:local-weyl} implies \eqref{eq:main}, since
$V_N\leq V_N^{1-\alpha\theta+\epsilon}$.  We may therefore suppose
$\theta>\epsilon/4$.  The trivial representation has multiplicity one; we
remove it from the count and absorb this contribution into the final implied
constant.

Choose
\[
                       R=\alpha\log V_N-C_1,
\]
where $C_1$ is sufficiently large for \cref{lem:geometric-trace}.  When
$R\leq1$, the volume $V_N$ is bounded in terms of $\alpha$ and $C_1$, and
the assertion follows from \eqref{eq:local-weyl} after changing the implied
constant.  We may therefore assume $R>1$.  First suppose
\begin{equation}\label{eq:R-long}
                 R\theta\geq C\log((2+L)(2+R)).
\end{equation}
Use \cref{prop:positive-family} with $\eta=\theta$.  By
\cref{lem:p-poly}, every representation counted by
$M_{\mathrm{sph}}(L,\Gamma_N,p)$ has
$\vartheta(\lambda)\geq\theta$ and therefore belongs to
$\mathcal A_{L,\theta}$.  The spectral trace formula
\eqref{eq:spectral-trace}, positivity, and
\eqref{eq:family-lower} give
\[
 \Tr R_{\Gamma_N}(F_{R,L,\theta})
 \gg_G(1+L)^{-C}R^{-C}
       \ee^{2R(1-1/p)}M_{\mathrm{sph}}(L,\Gamma_N,p).
\]
Indeed, $\vartheta(\nu)\geq\theta$ implies $p_\nu\geq p$, and hence
$1-1/p_\nu\geq1-1/p$.

On the geometric side, \cref{lem:geometric-trace} and
\eqref{eq:family-upper} give
\[
 \Tr R_{\Gamma_N}(F_{R,L,\theta})
 \ll_G(1+L)^CR^C V_N^{1+\epsilon}\ee^R.
\]
Divide the geometric upper bound by the contribution of one representation
in the spectral lower bound.  Absorbing the fixed powers of $R$ into
$V_N^\epsilon$ gives
\begin{align*}
 M_{\mathrm{sph}}(L,\Gamma_N,p)
 &\ll_G(1+L)^C
 V_N^{1+\epsilon}\ee^{-R(1-2/p)}\\
 &\ll_{G,\Gamma_1,\alpha,\epsilon}(1+L)^C
 V_N^{1-\alpha(1-2/p)+2\epsilon}.
\end{align*}
Replacing $\epsilon$ by $\epsilon/2$ proves \eqref{eq:main} in the long
range.

It remains to consider the complementary, short range, in which
\eqref{eq:R-long} fails.  Then
\begin{equation}\label{eq:short-range}
 R\theta\ll_G\log(2+L)+\log(2+R).
\end{equation}
Since $R=\alpha\log V_N+O_{G,\Gamma_1}(1)$, for all sufficiently large
$V_N$ we have
\[
 C_G\log(2+R)+O_{G,\Gamma_1}(1)
 \leq\frac\epsilon2\log V_N.
\]
It follows from \eqref{eq:short-range} that
\[
 V_N^{\alpha\theta-\epsilon/2}
 \ll_{G,\Gamma_1,\alpha,\epsilon}(1+L)^C
\]
with a fixed $C=C(G)$.  The remaining bounded values of $V_N$ are absorbed
by the implied constant.  Therefore \eqref{eq:local-weyl} gives
\[
 M_{\mathrm{sph}}(L,\Gamma_N,p)
 \ll_{G,\Gamma_1}V_N(1+L)^d
 \ll_{G,\Gamma_1,\alpha,\epsilon}
 (1+L)^CV_N^{1-\alpha\theta+\epsilon}.
\]
This completes the proof.
\end{proof}

\begin{proof}[Proof of \cref{cor:equivalence}]
The forward implication is \cref{thm:main}.  The reverse implication is
\cite[Theorem 1.6 and Section 7.2]{GK}; the proof preserves the parameter
$\alpha$.  The two formulations of weak injective radius agree by
\cref{rem:wirp-formulations}.
\end{proof}

\medskip
\noindent\hrulefill

\subsection*{Acknowledgements}

The author thanks Tobias Weich for helpful comments on an earlier version of
this paper.

\subsection*{Acknowledgement of AI assistance}

OpenAI's ChatGPT was used during the development of this note to explore the
kernel construction, check calculations, locate references, and assist with
drafting and revision.  The author assumes full responsibility for the
contents of the paper.

\end{document}